\theoremstyle{plain}
\newtheorem{Th}{Theorem}[section]
\newtheorem{Lemma}[Th]{Lemma}
\newtheorem{Cor}[Th]{Corollary}
\newtheorem{Prop}[Th]{Proposition}
\theoremstyle{definition}
\newtheorem{Def}[Th]{Definition}
\newtheorem{Conj}[Th]{Conjecture}
\newtheorem{Rem}[Th]{Remark}
\newtheorem{?}[Th]{Question}
\newtheorem{Ex}[Th]{Example}
\newtheorem{Axiom}[Th]{Axiom}
\newcommand{\NOTE}[1]{{\color{red}(#1)}}
\def\<{\langle}
\def\>{\rangle}
\newcommand{\Aa}{{\mathbb A}}
\newcommand{\Var}{\operatorname{Var}}
\newcommand{\Spec}{\operatorname{Spec}}
\newcommand{\Con}{\operatorname{Con}}
\newcommand{\Maps}{\operatorname{Maps}}
\newcommand{\End}{\operatorname{End}}
\newcommand{\GW}{\operatorname{GW}}
\newcommand{\SH}{\operatorname{SH}}
\newcommand{\one}{\mathbbm{1}}
\newcommand{\chic}{\operatorname{\chi_c}}
\renewcommand{\P}{{\mathbb{P}}}
\newcommand{\Sch}{\operatorname{\mathbf{Sch}}}
\newcommand{\Sm}{\operatorname{\mathbf{Sm}}}
\newcommand{\sU}{\mathcal{U}}
\newcommand{\sC}{\mathcal{C}}
\newcommand{\sA}{\mathcal{A}}
\numberwithin{equation}{section}
\begin{document}

\nocite{*} 

\title{\textsc{Motivic characteristic classes for singular schemes}}

\author{\textsc{\normalsize Ran Azouri}}

\date{} 

\maketitle

\begin{abstract}
We construct characteristic classes for singular algebraic varieties in motivic Borel-Moore homology, extending the motivic Euler class of the tangent bundle defined for smooth varieties. The two classes we define refine, in the setting of motivic homotopy theory, the top degree of the class constructed by Brasselet-Sch\"urmann-Yokura in $G$-theory, and the top degree of the pro-CSM class constructed by Aluffi in pro-Chow groups. We deduce an extension of the motivic Gauss-Bonnet formula to non-smooth proper varieties.
\end{abstract}

\setcounter{tocdepth}{1}
\tableofcontents

\section{Introduction}

One cornerstone for defining characteristic classes on singular spaces is MacPherson's work \cite{Mac}, affirming, in characteristic $0$, a conjecture of Deligne and Grothendieck on the existence of Chern classes for singular algebraic varieties. It involves the construction of a natural transformation between the functors of constructible functions and singular homology on $X$, a complex compact algebraic variety
\[ \Con(X) \rightarrow H_*(X) .\]
By taking the image of the constant constructible function $\one_X \in \Con(X)$ under this transformation, one obtains a class $c_{SM} (X) \in H_*(X)$, which coincides with the total Chern class for a smooth $X$. This condition, together with some functoriality properties, characterises the classes $c_{SM}(X)$ uniquely, making the class $c_{SM}$ a natural way to extend the the Chern class from smooth to singular varieties. The class constructed by MacPherson coincides with the one defined by Schwartz, and is called \textit{The Chern-Schwartz-MacPherson class} of $X$. This class was adapted by Kennedy  \cite{Ken} to an algebraic setting, where it takes values in the Chow ring $CH_*(X)$ instead of singular homology.

Motivic homotopy theory brings a setup which allows to deal with a wide array of homology theories on schemes. Each motivic ring spectrum $E \in \SH(k)$ defines a Borel-Moore homology, with the choice $E = H\mathbb{Z}$ defining Chow homology. It is possible to assign to each smooth scheme $X$ the Euler class of the tangent bundle of $X$, $e^E(X)$, which lives in the Borel-Moore homology that $E$ defines. In the case of Chow groups this is the top Chern class of $X$. With the class $e^E(X)$, being defined for a smooth $X$ as a starting point, one may seek to extend it to a class defined for every variety $X$. Such a class may be called \emph{'Euler-Schwartz-MacPherson class'}, and would be defined for an arbitrary motivic Borel-Moore homology. The Euler class are invariants satisfying a {\it cut-and-paste} relation with respect to subvarieties, i.e. they factor through the Grothendieck group of varieties.

 In section 2  of this paper we present the the motivic Euler class of the tangent bundle $e^E(X)$, defined for a motivic Borel-Moore homology theory $E$ and a smooth variety $X$; we prove the key property of additivity along blow-up squares. Section 3 contains a discussion on how to define characteristic classes for singular schemes in a general Borel-Moore homology theory. Replacing the group of constructible functions on $X$,  which appears in MacPherson's original work, by the simpler relative Grothendieck ring of varieties $K_0(\Var_{/X})$, we look for compatible functors
\[K_0(\Var_{/X}) \to E(X)\]
where $E(-)$ is a Borel-Moore homology functor. As a choice for a Borel-Moore homology $E(-)$, one possibility is to take the zeroth Borel-Moore homology defined by a motivic ring spectrum $E$, and then extend the Euler class $e^E(X)$, from smooth varieties to all varieties. We get a class
\[
e ^E \langle X \rangle \in E_0(X), \] in line with the construction of the class $mC_{-1}$ by Brasselet-Sch\"urmann-Yokura \cite{BSY} in $G$-theory, which can be recovered by the choosing the motivic ring spectrum $KGL$ representing algebraic $K$-theory. Another possibility for a Borel-Moore homology theory $E(-)$, is that of motivic pro-homology groups, containing data about the different completions (see Definition \ref{Defgoodclosure}) of each variety. These pro-homology groups are defined in section 4, where we also construct a {\it pro-Euler class} in these groups. The construction extends Aluffi's approach for constructing pro-CSM classes in the pro-Chow group setting, to a more general Borel-Moore pro-homology theory.  We construct a class as detailed in the following theorem.
\begin{Th}[Definition \ref{Defproeulerclass}]
	Let $k$ be a field of characteristic $0$ and $E \in \SH(k)$ a motivic ring spectrum. Then there is a well defined class \[ e^E\{X\}  \in \hat{E}_0(X/k)         \] for each algebraic variety $X$ over $k$. The definition extends the Euler class of the tangent bundle  $e^E(X) \in E_0(X/k)$ for smooth and proper varieties, to singular varieties $X$, and satisfies the {\it cut-and-paste} property. For $E = H \mathbb{Z}$, the class $e^{H \mathbb{Z}}\{-\}$ agrees with the top degree of the proCSM class constructed by Aluffi.
\end{Th}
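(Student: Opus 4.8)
The statement is really a construction together with two compatibility checks, so the plan is: (i) produce, for each variety $X$, a coherent family of classes indexed by good closures; (ii) verify it restricts to $e^E$ on smooth proper varieties and is additive; (iii) identify it with Aluffi's class when $E=H\mathbb Z$. Characteristic $0$ is used for Hironaka resolution and for Bittner's presentation. First I would fix $X$ and recall from Section~4 that $\hat E_0(X/k)=\varprojlim_{\bar X} E_0(\bar X/k)$, the limit over the cofiltered system of good closures $X\hookrightarrow\bar X$ (nonempty by Nagata, cofiltered by passing to a dominating compactification via Hironaka). For a fixed good closure $\bar X$ I would invoke the relative form of Bittner's presentation: $K_0(\Var_{/\bar X})$ is generated by classes $[h\colon Y\to\bar X]$ with $Y$ smooth and proper over $k$ (so $h$ is automatically proper), modulo $[\emptyset]=0$ and the blow-up relation $[\mathrm{Bl}_W Y\to\bar X]-[\widetilde W\to\bar X]=[Y\to\bar X]-[W\to\bar X]$ for $W\subseteq Y$ smooth closed. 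Define
\[ \Phi_{\bar X}\colon K_0(\Var_{/\bar X})\longrightarrow E_0(\bar X/k),\qquad [h\colon Y\to\bar X]\longmapsto h_*\,e^E(Y). \]
That $\Phi_{\bar X}$ is well defined is exactly the additivity of the motivic Euler class of the tangent bundle along blow-up squares from Section~2 (applied on $Y$ and pushed forward along $h$), together with $e^E(\emptyset)=0$. Then set $e^E_{\bar X}(X):=\Phi_{\bar X}\big([X\hookrightarrow\bar X]\big)\in E_0(\bar X/k)$.

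Next I would assemble these into a pro-class. For a morphism of good closures $\pi\colon\bar X'\to\bar X$ (proper, restricting to $\id_X$), functoriality of Bittner's presentation under proper pushforward gives $\pi_*\circ\Phi_{\bar X'}=\Phi_{\bar X}\circ\pi_*$, hence $\pi_*\,e^E_{\bar X'}(X)=\Phi_{\bar X}\big(\pi\circ(X\hookrightarrow\bar X')\big)=\Phi_{\bar X}(X\hookrightarrow\bar X)=e^E_{\bar X}(X)$; so $(e^E_{\bar X}(X))_{\bar X}$ defines $e^E\{X\}\in\hat E_0(X/k)$. The same argument shows more: $[Y\xrightarrow{f}X]\mapsto$ (the family obtained by applying $\Phi_{\bar X}$ to the image of $[Y\to X]$ in each $K_0(\Var_{/\bar X})$) is a homomorphism $K_0(\Var_{/X})\to\hat E_0(X/k)$, natural for proper morphisms, with $e^E\{X\}$ its value on $[\id_X]$. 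The cut-and-paste property then follows from the scissor relation $[\id_X]=[Z\hookrightarrow X]+[U\hookrightarrow X]$ for $Z\subseteq X$ closed, $U=X\setminus Z$: at each good closure $\bar X$ one has $\Phi_{\bar X}([\id_X])=\Phi_{\bar X}([Z\hookrightarrow\bar X])+\Phi_{\bar X}([U\hookrightarrow\bar X])$, and the two summands are the images of $e^E\{Z\}$ and $e^E\{U\}$ under the canonical maps $\hat E_0(Z/k)\to\hat E_0(X/k)$ and $\hat E_0(U/k)\to\hat E_0(X/k)$ coming, respectively, from $\bar X\mapsto(\overline{Z}^{\bar X}\hookrightarrow\bar X)$ and from the inclusion of closure systems. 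For $X$ smooth and proper, $\id_X$ is terminal (hence cofinal) among good closures, so $\hat E_0(X/k)=E_0(X/k)$ and $e^E\{X\}=\Phi_X([\id_X])=(\id_X)_*\,e^E(X)=e^E(X)$, as claimed.

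Finally, for $E=H\mathbb Z$ one has $E_0(-/k)=CH_0(-)$, and Aluffi's pro-CSM class lies in $\varprojlim_{\bar X}CH_*(\bar X)$ over compactifications, its top degree being the family of $CH_0$-components. Kennedy's $c_{SM}$, in the additive normalization that factors as a natural transformation $K_0(\Var_{/\bar X})\to CH_*(\bar X)$, $[h\colon Y\to\bar X]\mapsto h_*\,c_*(\mathbbm 1_Y)$, satisfies $c_*(\mathbbm 1_Y)=c(TY)\cap[Y]$ for $Y$ smooth proper; taking $CH_0$-components yields a homomorphism sending $[h\colon Y\to\bar X]$ with $Y$ smooth proper to $h_*\big(c_{\dim Y}(TY)\cap[Y]\big)=h_*\,e^{H\mathbb Z}(Y)$ — the same values as $\Phi_{\bar X}$ on Bittner generators, hence the same homomorphism. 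Since the system of good closures and Aluffi's system of compactifications are mutually cofinal, the induced pro-classes coincide, so $e^{H\mathbb Z}\{X\}$ is the top degree of Aluffi's pro-CSM class.

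I expect the main obstacle to be the coherence in the second paragraph: showing that one consistent choice exists over the whole cofiltered diagram of good closures. This comes down to (a) the identity $\pi_*\Phi_{\bar X'}=\Phi_{\bar X}\pi_*$, which rests on the functoriality built into Bittner's presentation together with the functoriality and projection formula for Borel–Moore pushforward in $E$; and (b) verifying that good closures — with whatever normal-crossing-type conditions Definition~\ref{Defgoodclosure} imposes on the boundary — really do form a cofiltered category with the smooth proper ones cofinal when $X$ is proper, which is where Hironaka's resolution enters and where the bookkeeping with boundary divisors is heaviest. The genuinely new input, additivity of $e^E$ along blow-up squares, is used here purely as the Section~2 black box that makes $\Phi_{\bar X}$ well defined.
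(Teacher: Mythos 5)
Your construction is correct, but it takes a genuinely different (and shorter) route than the paper. You build the pro-class levelwise out of \emph{Definition I}: at each compactification $\bar X$ you apply the homomorphism $\Phi_{\bar X}=c_{\bar X}\{-\}:K_0(\Var_{/\bar X})\to E_0(\bar X/k)$ of Proposition~\ref{cclass} (whose existence rests on Bittner plus the blow-up additivity of Theorem~\ref{Eulerblowup}) to the class $[X\hookrightarrow\bar X]$, and you get compatibility in the inverse system for free from the naturality of $c_{-}\{-\}$ under proper pushforward together with $\pi_*[X\hookrightarrow\bar X']=[X\hookrightarrow\bar X]$; this sidesteps entirely the paper's machinery of good local data (Definition~\ref{goodlocaldata}), the weak-factorization reduction of Lemma~\ref{classsmooth}, and the stratum-by-stratum blow-up computations in Theorem~\ref{cdeftheorem}. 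The paper instead defines the component on a good closure $\bar U$ of a smooth $U$ by the explicit alternating sum $c_U^{\bar U}=\sum_I(-1)^{|I|}i_{D_I*}e(D_I)$ over the strata of the boundary snc divisor and then verifies directly that these form good local data; what that buys is the concrete formula that generalizes Aluffi's construction, which is the form used in the comparison of Remark~\ref{Alufficomp} and in the Gauss--Bonnet application. Your classes do coincide with the paper's: in $K_0(\Var_{/\bar U})$ one has $[U]=\sum_I(-1)^{|I|}[D_I]$ by inclusion--exclusion on the snc strata, and applying $\Phi_{\bar U}$ recovers exactly $c_U^{\bar U}$, so your $E=H\mathbb{Z}$ comparison with Aluffi (which, like the paper's, ultimately comes down to both classes factoring through $K_0(\Var_{/-})$ with the same values on smooth proper generators) goes through.

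One small correction: for a singular $X$ there are no good closures in the sense of Definition~\ref{Defgoodclosure} (the snc condition forces $X$ smooth), so you should index the limit by all closures (or completions), as the paper does; your argument is unaffected, since $\Phi_{\bar X}$ is defined for an arbitrary reduced $\bar X$ and your compatibility check uses only properness of the transition maps. In particular, in your approach weak factorization is not needed at all, and resolution enters only through the identification of $\hat E$ and the nonemptiness/cofiltering of the indexing category.
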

 In our construction follow a similar approach to Aluffi, but in the generality of a motivic Borel-Moore homology theory. The main difference is that where Aluffi uses the Chern class of the sheaf of differential forms with poles along a divisor, we use an alternating sum of Euler classes of strata of a normal crossing divisor, as Chern classes do not exist in this generality.
On each good closure $\bar{U} / U$ (see Definition \ref{Defgoodclosure}), the class $e_{U}^{\bar{U}}$ in the motivic Borel-Moore homology group of $\bar{U}$, $E_0(\bar{U}/k)$, is defined by
\[
e_{U}^{\bar{U}} = \sum_{I \subset \mathcal{P}(\{1,...,r\})} (-1)^{|I|}i_{D_I *} e(D_I) ,
\]
where $D=\bar{U}\backslash U=\bigcup_{i=1}^{s}D_{i}$ is a simple normal
crossings divisor, $D_I = \cap_{i\in I} D_i$, $i_{D_I} : D_I \rightarrow \bar{U} $ are the inclusions, and $e(D_I)$ is the Euler class of the tangent bundle of $D_I$. The major part of the proof is showing that the collection of ${e^{\bar{U}}_U}$ gives a well defined class in the Borel-Moore pro-homology for $U$ smooth, that also extends to any variety $X$. The key property of the Euler class which makes this construction work, is additivity along blow-up squares, see Axiom \ref{Axiom}.

  Note that in a recent work, \cite{JSY}, Jin, Sun and Yang give independently a different construction of a pro-class in motivic Borel-Moore homology which also agrees with Aluffi's construction in pro-Chow groups. Their class agrees with the class $e^E\{X\}$ defined in this paper, see also Remark \ref{RemJSY}.
  
   In section 5 we deduce an extension of the motivic Gauss-Bonnet formula of \cite{DJK} to possibly singular proper varieties, using the pro-class described above. 
    \begin{Th}[Singular motivic Gauss-Bonnet formula, Theorem \ref{singGB}]
 	Let $\pi : X \rightarrow \Spec k$ be a proper variety over a field of characteristic $0$, and let $E \in \SH(k)$ be a motivic ring spectrum. Let $u: \one_k \rightarrow E $ be the unit map in $\SH (k)$. Then
 	\[ u_* \chic(X/k) = \pi_* e^E\{X\} .\]
 	\end{Th}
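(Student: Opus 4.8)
The plan is to reduce the identity, by additivity, to the case of smooth and proper $X$, where it becomes the motivic Gauss-Bonnet formula of \cite{DJK}.

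\textbf{Reduction to smooth projective varieties.} Both sides lie in the honest abelian group $\pi_0 E = E_0(\Spec k/k)$: for $X = \Spec k$ the pro-structure on $\hat{E}_0(\Spec k/k)$ is trivial (its good closure is itself), so $\pi_* e^E\{X\} \in E_0(\Spec k/k)$. I claim that $X \mapsto \pi_* e^E\{X\}$ and $X \mapsto u_*\chic(X/k)$ extend to additive maps $K_0(\Var_{/k}) \to E_0(\Spec k/k)$. For the first: by the cut-and-paste property the class $e^E\{-\}$ is the value on $\one_X$ of a transformation $K_0(\Var_{/-}) \to \hat{E}_0(-/k)$ compatible with proper pushforward, so post-composing with $\pi_*$ and specializing to $X = \Spec k$, where $K_0(\Var_{/\Spec k}) = K_0(\Var_{/k})$, yields a group homomorphism sending $[Y\to\Spec k]$ to $\pi_* e^E\{Y\}$. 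For the second: $\chic(-/k)\colon K_0(\Var_{/k}) \to \GW(k)$ is a motivic measure, which we post-compose with $\pi_0(u)\colon \GW(k) \to E_0(\Spec k/k)$. It therefore suffices to verify the formula on a set of additive generators of $K_0(\Var_{/k})$. Since $\operatorname{char} k = 0$, Hironaka resolution and the weak factorization theorem are available, and Bittner's presentation of $K_0(\Var_{/k})$ shows it is generated as a group by the classes $[X]$ of smooth projective $k$-varieties. So we may assume $X$ smooth and projective, in particular smooth and proper.

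\textbf{The smooth and proper case.} Here the pro-structure is again trivial and, by the extension property in the first theorem of the introduction, $e^E\{X\} = e^E(X) \in E_0(X/k)$ is the Euler class of the tangent bundle of $X$; moreover $\chic(X/k) = \chi(X/k) \in \GW(k)$ is the categorical Euler characteristic of the strongly dualizable object $\pi_*\one_X$ of $\SH(k)$, since $X$ is proper. The motivic Gauss-Bonnet formula of \cite{DJK} computes this invariant as $\pi_* e^{\one}(X) = \chi(X/k)$ in $\GW(k) = E_0(\Spec k/k)$ with $E = \one_k$, where $e^{\one}(X)$ denotes the Euler class of $T_X$ in $\one$-homology. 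Applying the unit map $u$, using that Euler classes are compatible with $u$, so that $u_* e^{\one}(X) = e^E(X)$, and that $u_*$ commutes with proper pushforward, we conclude
\[ \pi_* e^E(X) = \pi_* u_* e^{\one}(X) = u_* \pi_* e^{\one}(X) = u_* \chi(X/k) = u_* \chic(X/k), \]
which is the claim.

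\textbf{Main point.} The argument rests on two imported results — additivity via cut-and-paste and the smooth-proper Gauss-Bonnet of \cite{DJK} — and the work lies entirely in their interface. I expect the two things requiring genuine care to be: (i) matching the Euler class $e^E(X)$ as constructed in Section 2, for smooth proper $X$, with the Euler class of $T_X$ for which \cite{DJK} prove their formula, together with the induced compatibility $u_* e^{\one}(-) = e^E(-)$ along $u\colon \one_k \to E$, keeping track of Thom twists and orientations in $\SH(k)$; and (ii) the naturality of $e^E\{-\}$ under proper pushforward underlying the additivity of $X \mapsto \pi_* e^E\{X\}$, i.e. that $\pi_*$ commutes with the defining pro-limit and with the closed pushforwards $i_{D_I *}$ — establishing this directly also gives an alternative proof that bypasses Bittner, via the inclusion-exclusion $[X] = \sum_{I\subseteq\{1,\dots,r\}}(-1)^{|I|}[D_I]$ over a good stratification, applying the smooth-proper case to each smooth proper $D_I$, and using additivity of $\chic$. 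Neither point is deep, but both demand careful bookkeeping.
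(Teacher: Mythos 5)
Your proof is correct and takes essentially the same route as the paper: reduce to the smooth proper case via the factorization of both sides through $K_0(\Var_{/k})$, then invoke the Gauss-Bonnet formula of Déglise--Jin--Khan. The paper's own proof is a two-line sketch --- ``smooth stratification and the cut-and-paste formulas'' --- which, read literally, only reduces to smooth (not necessarily proper) strata; your appeal to Bittner's presentation of $K_0(\Var_{/k})$ by smooth projective generators (or, equivalently, your suggested inclusion--exclusion $[U]=\sum_I(-1)^{|I|}[D_I]$ over a good closure) is what actually lands you in the regime where \cite{DJK} applies, so your version closes a small gap the paper elides. Two minor remarks: DJK already state Gauss--Bonnet for an arbitrary motivic ring spectrum $E$, so the detour through $E=\one_k$ and then applying $u_*$ is unnecessary, though harmless; and the additivity of $X\mapsto u_*\chic(X/k)$ relies on the compactly-supported Euler characteristic being a motivic measure, which in the motivic-homotopy setting is supplied by the gluing triangle $j_!j^!\to\id\to i_*i^*$ and the additivity of traces (cf.\ the paper's Proposition on cut-and-paste and \cite[Proposition 2.11]{Az}) --- worth citing explicitly since it is less standard than the topological analogue.
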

 
 As an example which gives a {\it quadratic refinement} for the classical classes in Chow groups, choose Eilenberg-Maclane spectrum of the Milnor-Witt sheaf, $E = H \mathcal{K}_*^{MW}$. Applying the constructions above we get a class in Chow-Witt groups $e^{CW}\langle X \rangle \in \widetilde{CH}_0(X)$ by our first definition, and a class in pro-Chow-Witt groups $e^{CW}\{X\} \in \hat{\widetilde{CH}}_0(X)$ by our second definition. If $X$ is proper, then pushing forward each of those classes to the base field we get the quadratic Euler characteristic with compact supports of $X$, $\chic(X/k)$. This is an identity of quadratic forms, i.e. in the Grothendieck-Witt ring of the base field, $\GW(k)$.

\paragraph*{Notation}
Let $k$ be a perfect field,  let $\Sch_k$ denote the category of separated finite type $k$-schemes, let $\Sch_k^{red}$ denote the full subcategory of reduced separated finite type $k$-schemes, we call objects in this category varieties. Let $\Sch_k'$ be the subcategory of $\Sch_k^{red}$ where only proper morphisms are considered. Let $\Sm_k$ denote the full subcategory of $\Sch_k$ of separated finite type smooth $k$-schemes. Let $\Sm^p_k$ denote the full subcategory of $\Sm_k$ spanned by schemes which are also proper over $k$. By a morphism of schemes we always mean a separated morphism of finite type. For a quasi-compact quasi-separated scheme $S$, we denote by $\SH(S)$ the stable $\infty$-category of motivic $S$-spectra. Throughout the paper we assume our field $k$ to admit resolution of singularities and weak factorization, see Appendix~\ref{section:resofsingandweakfac} for definitions.

\section*{Acknowledgements}

The author would like to thank Marc Levine for suggesting this topic in the course of the author's PhD studies, and for the many helpful discussions. The author would also like to thank Joseph Ayoub and Fangzhou Jin for very useful discussions. This paper is part of a project that has received funding from the European Research Council (ERC) under the European Union’s Horizon 2020 research and innovation programme (grant agreement No 832833). In addition the author is supported by the Swiss National Science Foundation (SNSF), project 200020 178729.\\
\includegraphics[scale=0.08]{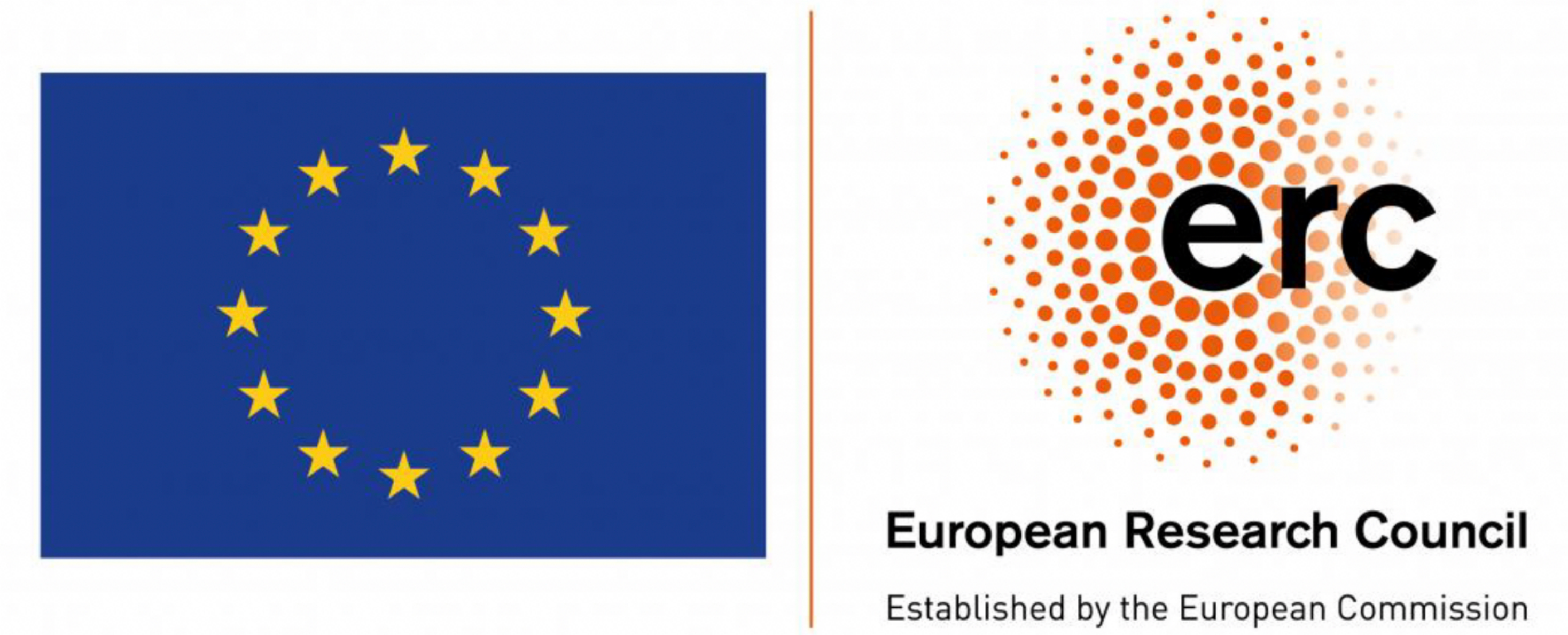}

\section{The motivic Euler class of the tangent bundle}

\subsection{$\mathbb{A}^1$-bivariant homology}

We use the formalism and notation of \cite{DJK} describing functoriality properties on motivic homology and cohomology theories.
Let $S$ be a quasi-compact and quasi-separated scheme and consider the symmetric monoidal category $\SH(S)$. For $X \in \Sm_S$ with a vector bundle $V \to X$ and zero section $s: X \to V$, we denote by $Th_X(V)$ the space $\Sigma_{\P^1}^\infty V /( V \setminus s(X))$ in $\SH(X)$. This definition can be extended to any virtual vector bundle on $X$, that is an element $v$ in the K-theory space $K(X)$, giving a $\otimes$-invertible object $Th_X(v) \in \SH(X)$, see \cite[2.1.5]{DJK}.

\begin{Def}
	Let $E\in SH(S)$ be a  motivic ring spectrum, let $p:X \to S $ be an $S$-scheme, with a virtual vector bundle {$v \in K(X)$}.
	Define the Borel-Moore homology (also called bivariant theory) spectrum with coefficients in $E$ on the scheme $X$ with twist $v \in K(X)$, to be the mapping spectrum 
	\[E(X/S, v) = \Maps_{\SH(X)}(Th_X(v), p^! E) ;\] 
	and the $n$-th Borel-Moore homology group by
	\[
	E_n(X/S,v)=\pi_nE(X/S, v) = [Th_X(v)[n],p^!E]_{\SH(X)} \simeq [p_!Th_X(v)[n], E]_{\SH(S)} .
	\]
	{We also set} $E_n(X/S)=E_n(X/S,0)$ and $H_n(X/S, v)=\one_n(X/S,v) $.
\end{Def}
For a proper morphism $f: X\rightarrow Y$, $X,Y \in \Sch_k$ there is the direct image map (\cite[2.2.7]{DJK}),
\begin{equation}
	f_*: E_n(X/k,f^*(v)) \rightarrow E_n(Y/k,v)  . \label{pforward}
\end{equation}
For a quasi-projective lci morphism $f: Y \rightarrow X $ we have the Gysin map \cite[4.3.4]{DJK}
\begin{equation}
	f^!: E_n(X/S, v) \rightarrow E_n(Y/S,f^*(v)+\langle L_f \rangle ) \label{pback} 
\end{equation} defined by a fundamental class $\eta_f \in E_n(Y/X, \langle L_f \rangle)$.
Here $L_f$ is the virtual vector bundle arising from the relative cotangent complex for $f$.
For a motivic ring spectrum $E$, the unit map $u : \one_S \to E$ defines the \textit{$\mathbb{A}^1$-regulator} natural transformation, \[ u_* : H_n(X/S, v) \rightarrow E_n(X/S, v) .\]

\subsection{The motivic Euler class}

\begin{Def} \label{Eulerclassdef}
	Let $E \in SH(S)$ be a motivic ring spectrum. Let $X \in \Sch_S$ and $V$ a vector bundle on $X$. Denote by $e^E(V)$ the Euler class of $V$, defined as the composite
	\[ \one_{X} \simeq \Sigma_{\P^1}^{\infty} V \to \Sigma_{\P^1}^{\infty} V/(V \setminus 0) = Th_X(V) \to Th_X(V) \otimes p^*E . \]
	In the case where $p:X \to S$ is smooth, we have $p^* \simeq Th_X(-T_p) \circ p^!$ (smooth pullback, see e.g. \cite[Theorem 6.18]{Hoy}), and therefore the Euler class defines a map $\one_X \to Th_X(V-T_p) \otimes p^! E $, hence a class in $E_0(X/S, T_p - V)$. If also $V=T_p$, the tangent bundle of $p : X \to S$, we get a class $e^E(T_p)$ in $E_0 (X/S)$ which we denote also by $e^E(X)$.	For more on the motivic Euler class see \cite[5.3]{BW}
\end{Def}

\begin{Rem}
	In the case $E = H\mathbb{Z} $ the class $e^{H\mathbb{Z}}(X) \in CH_0(X)$ is the Euler class of the tangent bundle in Chow groups, and that is the top Chern class of the scheme. The Chern-Schwarz-MacPherson class is an extension of the Chern class to a class defined for singular schemes. For other motivic ring spectra $E$ where we do not have a notion of Chern class, the Euler class of the tangent bundle $e^E(X)$ in the theory $E$ may still be considered. E.g., for $E= H \mathcal{K}^{MW}_*$, the Eilenberg-Maclane Milnor-Witt motivic spectrum, we get a class in Chow-Witt groups, which we denote by $e^{CW}(X) \in \widetilde{CH}_0(X)$, for more details see \cite[Section 2]{Le20}. Our upshot is to extend this class, defined for smooth schemes, to a class for singular schemes in Borel-Moore $E$-homology.
\end{Rem}

\subsection{Additivity of the Euler class on blow-up squares}

We follow the description of Jin and Yang in \cite{JY19}. By $\SH_c(S)$ we denote the full subcategory of $\SH(S)$ spanned by constructible objects. Given a scheme $f: Y \rightarrow \Spec k $, an object $M \in SH_c(X) $ and an endomorphism $ u: M \rightarrow M $, the characteristic class $C_Y(M,u) \in H_0(X/k, 0) $ is defined by the composition
\[  \one_X \xrightarrow{u} Hom (M,M) \rightarrow \mathbb{D}(M) \otimes M \simeq M \otimes \mathbb{D} (M) \rightarrow \mathcal{K}_Y \] where $ \mathcal{K} _Y := f^! \one_k $,  $ \mathbb{D} (M) := Hom (M, \mathcal{K}_Y) $. We define $ C_X(M):= C_X(M, id_M)$. \\
We describe this class explicitly in the case $ M=p_* \one_X $, where $S$ is a $k$-scheme, $X$ is a smooth $k$-scheme, and $ p: X \rightarrow S $ is a proper morphism, following \cite[Proposition 5.1.15]{JY19}. $C_X(p_* \one_X, u)$ is equal then to the composite
\[ 1_S \rightarrow p_*\one_X \xrightarrow{u} p_* \one_X \xrightarrow{p_*e(T_{X/k})} p_* \mathcal{K}_X \rightarrow \mathcal{K}_S  \label{C_X} \]
where the first map is the unit $id \to p_*p^*$ on $\one_S$, and the last map is the counit map $p_*p^! \simeq p_!p^! \rightarrow id $ applied on $\mathcal{K}_S=f^! \one_k $. We have then
\begin{equation}
	C_X(p_* \one_X) = p_* e(X) \in H_0 (S/k)
\end{equation}
where $p_*$ is the pushforward on motivic Borel-Moore homology.
We are about to use the following result in the proof of the theorem below.

\begin{Prop}  \label{fangzhu} [additivity of characteristic classes \cite[Corollary 5.1.5]{JY19}]  
	
	Given a distinguished triangle in $SH_{c}(X)$

	\[
	L\rightarrow M\rightarrow N\rightarrow
	\]
	The following formula holds
	\[
	C_{X}(M)=C_{X}(L)+C_{X}(N) .
	\]
\end{Prop}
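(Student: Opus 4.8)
The plan is to recognise $C_X(M):=C_X(M,\id_M)$ as a categorical Euler characteristic and to deduce the formula from the additivity of traces along distinguished triangles. First I would make explicit the sense in which $M\in\SH_c(X)$ is dualizable: constructibility guarantees that the internal duality $\mathbb{D}_X(-)=\underline{\mathrm{Hom}}(-,\mathcal{K}_X)$ is an involution on $\SH_c(X)$ and that $M$ is a strongly dualizable object of the closed symmetric monoidal $\infty$-category $\SH(X)$, with evaluation $M\otimes\mathbb{D}_X(M)\to\mathcal{K}_X$ and a corresponding coevaluation. Chasing the definition, the composite
\[\one_X \xrightarrow{u} \underline{\mathrm{Hom}}(M,M)\to\mathbb{D}_X(M)\otimes M\simeq M\otimes\mathbb{D}_X(M)\to\mathcal{K}_X\]
is then exactly the \emph{trace} of $u$, valued in $[\one_X,\mathcal{K}_X]=H_0(X/k)$ (the ordinary trace for dualizability twisted by the dualizing object $\mathcal{K}_X$); so $C_X(M)=\chi(M)$ is the Euler characteristic $\mathrm{tr}(\id_M)$.

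Given this, the desired identity $C_X(M)=C_X(L)+C_X(N)$ is precisely the additivity of the Euler characteristic on the distinguished triangle $L\to M\to N\to$. I would derive it from the additivity of traces: in the triangulated setting this is May's additivity theorem, whose compatibility hypotheses hold here because the triangulation on $\SH_c(X)$ underlies the stable $\infty$-category $\SH(X)$ and $\mathbb{D}_X$ is exact; more robustly one works directly with $\SH(X)$ as a stable symmetric monoidal $\infty$-category, where the Euler characteristic refines to a functor on the $\infty$-groupoid of dualizable objects (equivalently, factors through $K_0$) and is therefore additive on cofiber sequences. Concretely, dualizing the triangle gives $\mathbb{D}_X(N)\to\mathbb{D}_X(M)\to\mathbb{D}_X(L)$; choosing compatible (co)evaluations one assembles a map of cofiber sequences which computes $\mathrm{tr}(\id_M)$ on the middle term and $\mathrm{tr}(\id_L)$, $\mathrm{tr}(\id_N)$ on the outer ones, and summing along the triangle yields the claim. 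Naturality of the trace under the counit $p_!p^!\to\id$ keeps track of the pushforward, so that the identity indeed lives in $H_0(X/k)$ and is compatible with the special formula $C_X(p_*\one_X)=p_*e(X)$ used afterwards.

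The step I expect to be the main obstacle is exactly this last input: additivity of traces is \emph{not} a formal consequence of the axioms of a symmetric monoidal triangulated category (unlike multiplicativity), so one genuinely has to either verify May's compatibility axioms for $\SH_c(X)$ or work at the $\infty$-categorical level and produce the coherent map of cofiber sequences realizing the additivity — the compatibility of the evaluation and coevaluation with the triangle structure is the technical heart. A secondary point worth pinning down carefully is that the duality implicit in the construction of $C_X$ is the Verdier duality $\mathbb{D}_X=\underline{\mathrm{Hom}}(-,\mathcal{K}_X)$ and that $M$ is dualizable in that relative sense, which is precisely where the constructibility of $M$ (rather than $M$ being an arbitrary object of $\SH(X)$) is used.
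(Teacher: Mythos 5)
The paper does not actually prove this proposition: it is imported verbatim from Jin--Yang, \cite[Corollary 5.1.5]{JY19}, so the only question is whether your sketch could serve as a substitute for their argument. Your guiding idea --- read $C_X(M)$ as a trace attached to the Verdier duality $\mathbb{D}(-)=\operatorname{Hom}(-,\mathcal{K}_X)$ and deduce the formula from additivity of traces along distinguished triangles --- is indeed the circle of ideas behind the cited result (which follows Lu--Zheng's treatment of the analogous \'etale statement). But as a proof it has two genuine gaps. First, the dualizability claim is wrong as stated: constructibility of $M$ does \emph{not} make $M$ strongly dualizable in the symmetric monoidal category $\SH(X)$; dualizable objects there are, roughly, the lisse ones, and a typical constructible object such as $i_*\one_Z$ for a closed immersion with nonempty open complement is not rigid. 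What constructibility buys is Verdier biduality $\mathbb{D}\mathbb{D}(M)\simeq M$ and the K\"unneth-type equivalence needed to form the composite $\one_X\to \operatorname{Hom}(M,M)\to \mathbb{D}(M)\otimes M\to\mathcal{K}_X$; the resulting class lies in $[\one_X,\mathcal{K}_X]=H_0(X/k)$ and is a Verdier-pairing trace, not the monoidal trace of a dualizable object. Consequently you cannot invoke the standard facts that Euler characteristics of dualizable objects are additive or factor through $K_0$ of the dualizable subcategory --- those statements simply do not apply to the objects at hand.

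Second, the step you yourself flag as the technical heart --- verifying May's compatibility axioms, or constructing the coherent map of cofiber sequences realizing additivity of the (co)evaluations --- is precisely the content of Jin--Yang's theorem and is not carried out in your sketch. At the triangulated level additivity of traces is not deducible from the axioms, as you note; at the $\infty$-categorical level one must construct the Verdier pairing coherently and prove its additivity in each variable with respect to fiber sequences, which is what \cite{JY19} does (and it is genuinely delicate, since the pairing is bivariant rather than the trace of an endomorphism of a dualizable object). So the proposal correctly identifies the strategy used in the source the paper quotes, but it defers exactly the point at issue and rests on a false intermediate claim about dualizability; as written it is an outline, not a proof.
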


\begin{Th}[additivity of Euler class on blow-ups] \label{Eulerblowup}
	Fix a motivic ring spectrum $A \in \SH(k)$ and denote by $e(\cdot)=e^A(\cdot)$ the Euler class. The following property is satisfied:
	let 
	\[
	\xymatrix{E\ar[d]\ar[r]^{\tilde{i}} & \tilde{X}\ar[d]^{q}\\
		C\ar[r]^{i} & X
	}
	\]
	be a blow-up square with $X$ and $C$ smooth over $k$, $X$ also proper. Then 
	\[
	e(X)-i_{*}e(C)=q_{*}(e(\tilde{X})-\tilde{i}_{*}e(E)) .
	\]	
\end{Th}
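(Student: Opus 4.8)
The plan is to read off the identity from the additivity of the characteristic class $C_X(-)$ (Proposition \ref{fangzhu}), applied to the distinguished triangle attached to the blow-up square, after first reducing to the universal coefficients $A=\one_k$. To set up the reduction, write $e^{\one}(Y)\in H_0(Y/k)$ for the Euler class of the tangent bundle of a smooth $Y$ with $\one_k$-coefficients. By Definition \ref{Eulerclassdef}, for any $A$ the class $e^A(Y)$ is the image of $e^{\one}(Y)$ under the $\mathbb{A}^1$-regulator $u_*\colon H_0(Y/k)\to A_0(Y/k)$, and $u_*$, being natural, commutes with the proper pushforwards $i_*$, $q_*$, $\tilde i_*$. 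Hence it suffices to prove the identity for $A=\one_k$ and then apply $u_*$; from now on I drop the superscript. (Note that $\tilde X$ and $E$ are automatically smooth over $k$ — a blow-up of a smooth scheme along a smooth center, and a projective bundle over $C$ — and that all four Euler classes land in the untwisted group $H_0(-/k)$.)

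\emph{The blow-up triangle.} Since the given square is an abstract blow-up square, cdh (blow-up) descent for $\SH$ produces a distinguished triangle
\[
\one_X \longrightarrow q_*\one_{\tilde X}\oplus i_*\one_C \longrightarrow g_*\one_E \longrightarrow \one_X[1]
\]
in $\SH(X)$, where $g=q\tilde i\colon E\to X$. As $i$, $q$, $g$ are proper morphisms between $X$-schemes of finite type, the three terms all lie in $\SH_c(X)$, so Proposition \ref{fangzhu} — and its immediate consequence $C_X(M\oplus N)=C_X(M)+C_X(N)$ — applies and yields
\[
C_X(q_*\one_{\tilde X})+C_X(i_*\one_C)=C_X(\one_X)+C_X(g_*\one_E).
\]

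\emph{Evaluating the terms.} Next I would substitute the explicit value $C_X(p_*\one_Y)=p_*e(Y)$, valid for each proper $p\colon Y\to X$ with $Y$ smooth over $k$, into the four summands, taking $p=\id_X$, $i$, $q$, $g$ and using $g_*=q_*\tilde i_*$. The displayed equality becomes
\[
q_*e(\tilde X)+i_*e(C)=e(X)+q_*\tilde i_* e(E),
\]
which rearranges to $e(X)-i_*e(C)=q_*\bigl(e(\tilde X)-\tilde i_*e(E)\bigr)$ in $H_0(X/k)$. Applying the regulator $u_*$ as above then gives the statement for an arbitrary motivic ring spectrum $A$, and the proof is complete.

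\emph{Main obstacle.} Both substantive inputs — the blow-up distinguished triangle of constructible objects in $\SH_c(X)$, and the formula $C_X(p_*\one_Y)=p_*e(Y)$ — are supplied by \cite{JY19} and the cdh-descent literature, so the real work is in organising them. I expect the point that needs the most care to be the reduction in the first paragraph: $C_X$ is intrinsically $\one_k$-valued, so the statement for a general $A$ must be deduced from the universal one, which requires knowing precisely that $u_*$ carries $e^{\one}$ to $e^A$ and is compatible with all the proper pushforwards occurring in the formula.
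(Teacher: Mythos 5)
Your proposal is correct, and it takes a genuinely different route from the paper. The paper applies the localization triangle $j_!j^*\to\mathrm{id}\to i_*i^*$ twice, once to $\one_X$ and once to $q_*\one_{\tilde X}$, obtaining two triangles sharing the common term $j_!\one_U$; it then equates the two resulting expressions for $C_X(j_!\one_U)$. You instead package the same information into the single cdh blow-up triangle $\one_X\to q_*\one_{\tilde X}\oplus i_*\one_C\to g_*\one_E\to$, which is somewhat more economical but also less self-contained: for blow-ups along smooth centres this triangle is in fact \emph{derived} from the two localization triangles (together with smooth base change and the projective bundle theorem), so the paper's two-triangle argument is really a direct proof of the special case of cdh descent you are invoking. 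A reader may prefer the explicit construction, since it makes visible the precise six-functor inputs (localization, proper and smooth base change, $i_!=i_*$, $j_!j^!=j_!j^*$) the argument depends on.

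One genuine improvement in your write-up is the explicit reduction to $\one_k$-coefficients. The characteristic class $C_X$ of Jin--Yang as recalled in the paper lands in $\one$-valued Borel--Moore homology $H_0(X/k)$, while the theorem is stated for an arbitrary ring spectrum $A$; the paper's proof silently identifies the two. Your observation that $e^A=u_*e^{\one}$ by inspection of Definition~\ref{Eulerclassdef}, together with compatibility of the regulator $u_*$ with proper pushforward, is exactly the missing bookkeeping needed to pass from the $\one$-valued identity to the $A$-valued one. Also, your remark that $\tilde X$ and $E$ are automatically smooth is correct and worth stating, since the four Euler classes must all be defined for the formula to make sense.
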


\begin{Rem}
	The property proven here is formulated as Axiom~\ref{Axiom} in the next section.
\end{Rem}	

\begin{proof}

	
	Consider the following diagram composed of pullback squares:
	\[
	\begin{tikzcd}E\ar[d, "r"]\ar[r, "\tilde{i}"] & \tilde{X}\ar[d, "q"] & \tilde{U}\ar[l, "\tilde{j}"']\ar[d, "\simeq"' , "s"]\\
		C\ar[r, "i"] & X & U \ar[l, "j"']
	\end{tikzcd}.
	\]
	Using the gluing property for motivic spectra (see e.g. \cite[Theorem 6.18 (4)]{Hoy}) on $\SH(X)$, i.e. the triangle 
	$$ j_{!}j^! \rightarrow id \rightarrow i_*i^* \rightarrow $$
	and applying it to $\one_X\in \SH(X) $ (for the open immersion $j$, $j^! = j^*$) we get
	$$	 j_{!}j^* \one_X \rightarrow \one_X \rightarrow i_*i^* \one_X \to $$, so we have
	\begin{equation}
		j_{!}\one_U \rightarrow \one_X \rightarrow i_*\one_C \to . \label{triangle1}
	\end{equation}
	Applying the same triangle to $ q_*\one_{\tilde{X}} \in \SH(X) $ gives
	$$ j_{!} j^* q_* \one_{\tilde{X}} \rightarrow q_* \one_{\tilde{X}} \rightarrow i_*i^*q_* \one_{\tilde{X}} \to $$
	Now $q_! = q_*$ as $q$ is proper, $s$ is an isomorphism, and by the smooth base change property (see e.g. \cite[Theorem 6.18 (3)]{Hoy}) on the square in the left we have
	$$  j_{!} j^* q_* \one_{\tilde{X}} \simeq j_{!} j^* q_! \one_{\tilde{X}} \simeq j_{!}s_! \tilde{j}^* \one_{\tilde{X}} \simeq j_{!}\one_U .$$
	By smooth base change in the square on the right, and using that for the closed immersions $i_! = i_*$,  $\tilde{i}_! = \tilde{i}_*$, we get
	$$ i_*i^*q_* \one_{\tilde{X}} \simeq i_*i^*q_! \one_{\tilde{X}} \simeq i_* r_! \tilde{i}^* \one_{\tilde{X}} \simeq i_* r_! \one_E \simeq  i_! r_! \one_E \simeq q_! \tilde{i}_! \one_E \simeq q_* \tilde{i}_* \one_E . $$
	In conclusion we have the triangle 
	\begin{equation}
		j_{!}\one_U \rightarrow q_* \one_{\tilde{X}} \rightarrow q_* \tilde{i}_* \one_E \to \label{triangle2} .
	\end{equation}
	Now we use proposition \ref{fangzhu} for the equations \eqref{triangle1} and \eqref{triangle2}. The objects in the triangles are all constructible since unit objects are constructible, and constructibility is stable under proper pushforward and exceptional pushforward, see \cite[Prposition 4.2.11, Proposition 4.2.12]{CD}. We get
	$$ C_X(j_{!}\one_U)=C_X(\one_X)-C_X(i_*\one_C) $$
	and
	$$ C_X(j_{!}\one_U)=C_X(q_* \one_{\tilde{X}})-C_X(q_* \tilde{i}_* \one_E) . $$
	By identity \eqref{C_X}, $C_{X}(q_{*}\tilde{i}_{*}\one_{E})=q_{*}\tilde{i}_{*}e(E)$,
	$C_{X}(q_{*}\one_{\tilde{X}})=q_{*}e(\tilde{X})$, $C_{X}(i_{C*}\one_{C})=i_{C*}e(C)$, and
	$C_{X}(\one_{X})=e(X)$, and so we obtain the desired formula.
\end{proof}

\section{Extending characteristic classes to singular schemes}

Sometimes a class defined on smooth varieties can be factored through the relative Grothendieck group of varieties $K_0(\Var_{/-})$. For a variety $S$,  $K_0(\Var_{/S})$ is the abelian group generated by isomorphism classes of morphisms $[X \to S]$, with the {\it cut-and-paste} relation $[X \to S] = [Y \to S] + [X - Y \to S ] $, where $Y \subset X$ is a closed $S$-subvariety.  Bittner's theorem \cite{Bitt} gives an equivalent {\it blow-up additivity} relation, which also defines the group. This yields a criterion for defining classes satisfying {\it cut-and-paste}, as noted by Brasselet, Sch\"urmann and Yokura in \cite[Corollary 0.1]{BSY}. We repeat this here, formulating it for an abstract Borel-Moore homology theory. Then we suggest two different ways to define a motivic Euler class based on that criterion.

\subsection{Borel-Moore Homology}

\begin{Def} \label{BMhomologyDef}
	Let $\Sch_k'$ be the subcategory of $\Sch^{red}_k$ with morphisms the proper maps. Let $\sA$ be an additive category such that filtered projective limits exist in $\sA$. 
	Let   $E:\Sch_k'\to \sA$ be a functor. For a morphism $f:Y\to X$ in $\Sch_k'$, denote by $f_*:E(Y)\to E(X)$ the morphism $E(f)$. We say that $E$ is an {\em additive} functor if for $X\in \Sch^{red}_k$ a disjoint union, $X=U\amalg V$, with inclusions $j_U:U\to X$, $j_V:V\to X$, the map $j_{U*}+j_{V*}:E(U)\oplus E(V)\to E(X)$ is an isomorphism. 
	\\
	A {\em  Borel-Moore homology functor over $k$ with values in $\sA$} is an additive functor  $E:\Sch_k'\to \sA$.
\end{Def}

\begin{Ex} \label{exmotBH}
	Let $E \in \SH (k)$ be a motivic ring spectrum and let $n \in \mathbb{Z}$, then $X\mapsto E_n(X/k)$ defines a Borel-Moore homology functor \cite{DJK}. We call such homology functor \emph{a motivic Borel-Moore homology functor}. 
\end{Ex}

\subsection{The blow-up additivity condition}

Our main method of producing characteristic classes for general varieties from classes defined for smooth varieties is based on the following condition which we refer to as Axiom \ref{Axiom}. We assume we are given a class $c(X)$ for every smooth variety and we would like to extend it to a well defined class on all varieties.

\begin{Axiom} \label{Axiom} Let $\{c(X) \in E(X)\}_{X \in \sC}$ be a family of classes defined for $\sC = \Sm^p_k$ or $\sC = \Sm_k$, where $E$ is a Borel-Moore homology functor. Then
	for each blow-up square 
	\[
	\begin{tikzcd}
		E\ar[d, "r"]\ar[r, hook, "j"] & \tilde{X}\ar[d, "q"]\\
		C\ar[r, hook, "i"] & X
	\end{tikzcd}
	\]
	with $X$ and $C$ smooth, $X$ proper over $k$, $\tilde{X} = Bl_C(X)$, $E = q^{-1}(X)$, we have
	\[
	c(X)-i_{*}c(C)=q_{*}(c(\tilde{X})-j_{*}c(E)).
	\]
\end{Axiom}
Note that by Theorem \ref{Eulerblowup}, this axiom is satisfied for the Euler class in motivic Borel-Moore homology.

\begin{Prop} \label{cclass} \cite[Corollary 0.1]{BSY}
	Let  $E:\Sch_k'\to \sA$ be a Borel-Moore homology functor as in \ref{BMhomologyDef}. Assume that we fix for each smooth $X \in \Sm_k$ a class $c(X) \in E(X)$, such that
	for each isomorphism $h: X \to X'$, $h_*(c(X))=c(X')$, and the blow-up axiom \ref{Axiom} is satisfied. Then $c(-)$ can be extended to a system of group homomorphisms
	\[ c_X\{-\}: K_0(\Var_{/X}) \to E(X)\] for each $X \in \Sch_X$, that satisfy:
	\item
	$c\{X\}:=c_X\{id_X :X \to X \}=c(X)$ for each smooth $X$ ;
	\item
	$c_X\{f:Y \to X\} = f_* c(Y)$ for $Y$ smooth over $k$, and for each proper $f: Y \to X$. \\
	These conditions determine the homomorphisms $c_X\{-\}$ uniquely.
\end{Prop}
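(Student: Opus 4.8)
The plan is to reduce this to Bittner's presentation of $K_0(\Var_{/X})$ and then apply functoriality. First I would recall Bittner's theorem: for a field admitting resolution of singularities and weak factorization, the group $K_0(\Var_{/X})$ is generated by classes $[Y \xrightarrow{f} X]$ with $Y$ smooth (and, if one wishes, $f$ proper), subject only to the relations $[\emptyset \to X] = 0$ and the blow-up relation $[\tilde{Y} \to X] - [E \to X] = [Y \to X] - [C \to X]$ for every blow-up square over $X$ with $Y, C$ smooth. This is precisely why the hypotheses on $k$ (resolution of singularities, weak factorization) were imposed in the Notation paragraph, and it is what makes the assignment on smooth varieties enough to determine everything.

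Given this, I would \emph{define} $c_X\{-\}$ on generators by $c_X\{f : Y \to X\} := f_* c(Y)$ for $Y$ smooth and $f$ proper, which forces property (2), and note that property (1) is the special case $f = \id_X$. I then need to check that this respects Bittner's relations so that it descends to a well-defined homomorphism on $K_0(\Var_{/X})$. The relation $[\emptyset] = 0$ is immediate since $E(\emptyset) = 0$ by additivity. For the blow-up relation over $X$: given a blow-up square with $Y, C$ smooth and a proper map $p : Y \to X$, I must show $p_* q_* \big( c(\tilde{Y}) - j_* c(E) \big) = p_* \big( c(Y) - i_* c(C) \big)$. The subtlety is that Axiom~\ref{Axiom} as stated requires $X$ (the ambient smooth variety being blown up) to be \emph{proper} over $k$, whereas here $Y$ need only be proper \emph{over} $X$, not over $k$. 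So the main obstacle is bridging this gap — I expect to handle it by a compactification/Chow's lemma argument: choose a proper $\bar{Y} \supseteq Y$ with $\bar{Y}$ smooth and proper over $k$ (using resolution of singularities on a compactification), arrange the blow-up center to extend, apply Axiom~\ref{Axiom} upstairs in $\bar Y$, and then push forward along $\bar Y \to \Spec k$; alternatively, if Theorem~\ref{Eulerblowup} / the abstract axiom is only ever invoked through the proper-over-$k$ case, one restricts attention to generators $[Y \to X]$ with $Y$ smooth proper over $k$, which by Bittner still generate, and the blow-up squares among those have smooth proper total spaces so the axiom applies directly. I would take the latter route: invoke the refined form of Bittner's theorem asserting $K_0(\Var_{/X})$ is generated by $[Y \to X]$ with $Y \in \Sm^p_k$ and relations coming from blow-up squares with all four corners in $\Sm^p_k$ (smoothness of a blow-up of a smooth proper variety along a smooth center being automatic, properness being preserved), so that Axiom~\ref{Axiom} applies verbatim and well-definedness follows. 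Checking that isomorphism-invariance $h_* c(X) = c(X')$ makes the value independent of the chosen representative of an isomorphism class of objects is then routine functoriality of $E$.

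Finally, for uniqueness: any homomorphism $K_0(\Var_{/X}) \to E(X)$ satisfying (1) and (2) agrees with $c_X\{-\}$ on the generators $[Y \to X]$ with $Y$ smooth by property (2), and since those generate the group, the two homomorphisms coincide. I would also remark on naturality — that the $c_X\{-\}$ are compatible with proper pushforward in $X$ via the functoriality $g_! : K_0(\Var_{/X}) \to K_0(\Var_{/X'})$ and $g_* : E(X) \to E(X')$ — since this is used implicitly later, though it is not part of the statement as written. The genuinely load-bearing input is Bittner's theorem together with the standing resolution-of-singularities and weak-factorization hypotheses; everything else is bookkeeping with the additive functor $E$.
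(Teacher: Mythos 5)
Your overall strategy is the same as the paper's: invoke Bittner's relative presentation of $K_0(\Var_{/X})$ (generators $[Y\to X]$ with $Y$ smooth over $k$ and $Y\to X$ proper, relations the blow-up relations over $X$), define $c_X\{f:Y\to X\}:=f_*c(Y)$ on generators, check that the relations are respected, and deduce uniqueness from generation. You also correctly isolate the point the paper's one-line proof passes over: Axiom~\ref{Axiom} is stated only for blow-up squares whose ambient scheme is proper over $k$, whereas Bittner's relations involve ambient schemes that are only proper over $X$.

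The route you choose to bridge this gap, however, contains a genuine error. There is no ``refined form of Bittner's theorem'' in which $K_0(\Var_{/X})$ is generated by classes $[Y\to X]$ with $Y\in\Sm^p_k$: for non-proper $X$ such classes do not even generate the group. Take $X=\mathbb{A}^1_k$. If $Y$ is proper over $k$, the image of any $k$-morphism $Y\to\mathbb{A}^1$ is a closed subset of $\mathbb{A}^1$ that is proper over $k$, hence a finite set of closed points; consequently every class $[Y\to\mathbb{A}^1]$ with $Y\in\Sm^p_k$ has empty generic fibre and dies under the base-change homomorphism $K_0(\Var_{/\mathbb{A}^1})\to K_0(\Var_{/\Spec k(t)})$, while $[\id_{\mathbb{A}^1}]$ maps to the unit, which is nonzero. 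So $[\id_{\mathbb{A}^1}]$ does not lie in the subgroup you restrict to, and your well-definedness argument does not get off the ground. Your first alternative (compactify $Y$, apply the axiom upstairs, push forward to $\Spec k$) also does not yield the required identity in $E(X)$: the Borel--Moore functor $E$ of Definition~\ref{BMhomologyDef} has only proper pushforwards, so the identity obtained in $E(\bar Y)$ cannot be restricted back to $Y$, and the proper map $Y\to X$ need not extend to $\bar Y\to X$. What is actually needed, and what the paper implicitly uses when it says Bittner's relations ``are the same as'' the squares in Axiom~\ref{Axiom}, is the blow-up identity for squares in which only the morphisms are proper (ambient $Y$ smooth over $k$ and proper over $X$, but not over $k$); for the motivic Euler class this costs nothing, since the proof of Theorem~\ref{Eulerblowup} uses only properness of the maps in the square, never properness of the ambient scheme over $k$. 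The correct repair is therefore to state and verify the axiom in that relative form, not to shrink the set of Bittner generators.
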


\begin{proof}
	This is a consequence of Bittner's theorem \cite[Theorem 5.1]{Bitt}, characterising the group $K_0(\Var_{/S})$ as the group generated by $S$-varieties $X$ which are smooth over $k$ and proper over $S$, with relations given by additivity with respect to blow-ups of $X$ over smooth centres. The additivity conditions are the same as the squares in Axiom~\ref{Axiom} . Then for each $S \in \Sch_k^{red}$,
	\[ [X \xrightarrow{f} S] \mapsto f_*c(X) \] defines $c_S\{-\}$. 
\end{proof}
 A straightforward property is compatibility with pushforwards, which can be considered a weak version of MacPherson's natural transformation, replacing constructible functions $\Con(-)$ by the relative Grothendieck ring of varieties $K_0(\Var_{/-})$.
\begin{Prop}
	With notation as in Proposition~\ref{cclass} above, let $p: Z \to X$ be a proper map of varieties, $X, Z \in \Sch_k^{red}$. The following diagram commutes
	\[
	\begin{tikzcd} K_0(\Var_{/Z}) \ar[r, "c_Z\{-\}"]\ar[d, "p_*"] & E(Z)\ar[d, "p_*"] \\
		K_0(\Var_{/X}) \ar[r, "c_X\{-\}"] & E(X) .
	\end{tikzcd}
	\]
	
\end{Prop}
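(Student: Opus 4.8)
The plan is to reduce the statement to a convenient set of generators of $K_0(\Var_{/Z})$ and then to combine the functoriality of the Borel--Moore homology functor $E$ with the explicit values of $c_Z\{-\}$ and $c_X\{-\}$ on those generators provided by Proposition~\ref{cclass}.

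First I would observe that both ways around the square are group homomorphisms $K_0(\Var_{/Z}) \to E(X)$. Here $p_* : K_0(\Var_{/Z}) \to K_0(\Var_{/X})$ denotes the homomorphism $[Y \xrightarrow{f} Z] \mapsto [Y \xrightarrow{p\circ f} X]$ obtained by post-composing with the fixed proper map $p$; it is well defined because this operation carries the cut-and-paste relations over $Z$ to the cut-and-paste relations over $X$. By Bittner's theorem, in the form recalled in the proof of Proposition~\ref{cclass}, the group $K_0(\Var_{/Z})$ is generated by the classes $[f: Y \to Z]$ with $Y$ smooth over $k$ and $f$ proper over $Z$, so it will suffice to verify $p_*(c_Z\{\alpha\}) = c_X\{p_*\alpha\}$ on such generators.

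Next, for a generator $[f: Y \to Z]$ with $Y \in \Sm_k$ and $f$ proper, I would compute both sides. On the left, Proposition~\ref{cclass} gives $c_Z\{f\} = f_* c(Y)$, and since $E$ is a functor on $\Sch_k'$ we have $p_* f_* = (p\circ f)_*$, hence $p_*(c_Z\{f\}) = (p\circ f)_* c(Y)$. On the right, $p_*[f] = [p\circ f : Y \to X]$, and because $p\circ f$ is proper (a composite of proper morphisms) with smooth source $Y$, the same property from Proposition~\ref{cclass} gives $c_X\{p\circ f\} = (p\circ f)_* c(Y)$. The two expressions agree, and since both composites are group homomorphisms the diagram commutes.

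I expect the proof to be essentially formal; the only points calling for a little care are checking that $p_*$ is well defined on the relative Grothendieck groups, and noting that Bittner's generating set consists of maps that are already proper over the base, which is exactly what guarantees that the composite $p\circ f$ still lies in the class of maps for which the identity $c_X\{-\} = (-)_* c(-)$ is available. I do not foresee a genuine obstacle beyond this bookkeeping.
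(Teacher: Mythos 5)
Your proof is correct and follows essentially the same route as the paper's: reduce to Bittner's generators $[f\colon Y\to Z]$ with $Y$ smooth and $f$ proper, evaluate both composites using $c\{f\}=f_*c(Y)$ and functoriality of pushforward, and observe they agree. The only difference is that you spell out the well-definedness of $p_*$ on $K_0(\Var_{/-})$, which the paper takes as given.
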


\begin{proof}
	Since they generate $K_0(\Var_{/Z})$, it is enough to check the claim for elements of the form $[f: W \to Z]$ with $f$ proper and $W$ smooth over $k$. Then $c_Z\{W\} = f_* c(W)$, and so $p_* c_Z\{W\} =p_* f_* c(W)$. On the other hand, $p_* [f: W \to Z] = [p \circ f : W \to X] \in K_0(\Var_{/X})$, and then $c _X\{p_* [f: W \to Z]\} = (p \circ f)_* c(W) = p_*f_* c(W)$ as well, so the diagram commutes.
\end{proof}

\subsubsection*{Definition I - The singular Euler class} \label{Eulerclassex1}
	Let $E_0(-/k)$ be the Borel-Moore homology functor defined by a motivic ring spectrum $E \in \SH(k)$ (see equation \eqref{pforward}). We have the Euler classes $e^E(X) \in E_0(X/k)$ for each $X \in \Sm_k$ (Definition \ref{Eulerclassdef}). By Theorem~\ref{Eulerblowup} those classes satisfy Axiom \ref{Axiom}. Thus they extend to define a class $e ^E \langle X \rangle := c_X\{[id: X \to X]\}$ in $E_0(X/k)$ for all $X \in \Sch_k^{red}$.
\begin{Ex} Take the motivic spectrum representing $K$-theory, $E=KGL$. We get then a class in $KGL_0(X/k)=G_0(X)$ (see e.g. \cite[Theorem 1.0.12]{Jin}), which for a smooth $X$ is the $K$-theoretical Euler class. By definition the class $e^{KGL} \langle X \rangle$ we obtain for a variety $X\in \Sch^{red}_k$ is the same as the zero degree term of the characteristic class $mC_{-1}$ in $G$-theory, defined by Brasselet, Sch\"urmann and Yokura in \cite[Theorem 2.1]{BSY}.
\end{Ex}

\begin{Ex} \label{exquadraticclass} Taking $E = H \mathbb{Z}$ we get a class in Chow groups, $e^{H \mathbb{Z}}\langle X\rangle \in CH_0(X)$.
	Taking $E=H \mathcal{K}^{MW}$, the Eilenberg-Maclane spectrum of Milnor-Witt Theory, we get a class 
	$e \langle X \rangle \in $ $H \mathcal{K}^{MW}_0(X/k) = {\widetilde{CH}}_0(X)$ (for the group equality see \cite[Chapter 6, Corollary 4.2.13]{BCD}),  in the zeroth Chow-Witt homology group. This can be considered as a \emph{quadratic refinement} to the Euler class in Chow groups $e^{H \mathbb{Z}}\langle X \rangle$. However, we also have the more elaborate construction of a pro-class in Example \ref{exproquadraticclass}. When pushed forward to the base field we get an element of $\GW(k)$, which yields the quadratic Euler characteristic of $X$, see Theorem \ref{singGB}.
\end{Ex}

\begin{Rem} The Chern-Schwartz-MacPherson class is compatible with specialization, see \cite{Ver}, \cite{Ken2}, and \cite[Theorem 3.4]{Schu}.	A similar specialization statement involving the class $e\langle X \rangle$ in our setting, at least for orientable theories, will appear in a subsequent work.
\end{Rem}
\subsubsection*{Definition II - The singular pro-Euler class} \label{Eulerclassex2}
	Let $E \in \SH(k)$ be a motivic ring spectrum. Following Aluffi, in Section \ref{sec:pro} we construct a Borel-Moore homology functor in pro-groups $\hat{E}_0(-/k)$, taking into account the inverse system of the different possible ways to complete a variety. We then define what we call \emph{the pro-Euler class} ${e} ^E\{X\} \in \hat{E}_0(X/k)$ for each $X \in \Sm_k$. Theorem~\ref{cdeftheorem} shows that the same condition as in Proposition \ref{cclass}, Axiom \ref{Axiom}, is what allows to construct such well defined class ${e}\{X\} \in \hat{E}_0(X/k)$ for each $X \in \Sch^{red}_k$. This class also factors through $K_0(\Var_{/-})$ as in Proposition  \ref{cclass}.
	
\begin{Ex}
	Take $E=H \mathbb{Z}$, the motivic Eilenberg-Maclane spectrum and let $X \in \Sch_k^{red}$. Then with $e^{H \mathbb{Z}}\{X\} \in \hat{H \mathbb{Z}}_0(X/k)= \hat{CH}_0(X)$ we get the same class as the zero degree of Aluffi's pro-CSM class $\{X\}_0 \in \hat{CH}_0(X)$, see Remark \ref{Alufficomp}. 
\end{Ex}

\begin{Ex} \label{exproquadraticclass}
	Take $E=H \mathcal{K}^{MW}$, the Eilenberg-Maclane spectrum of Milnor-Witt Theory, and let $\pi: X\to k$ be and algebraic variety.  we get $e^{H \mathcal{K}^{MW}}\{X\} \in \hat{H \mathcal{K}}^{MW}_0(X/k) = \hat{\widetilde{CH}}_0(X)$ in the pro-Chow-Witt group  $\hat{\widetilde{CH}}_0(X)$. That is a \emph{quadratic refinement} of the pro-Chow Euler class $e^{H \mathbb{Z}}\{X\}$. If we push it forward to the base field, i.e., consider $\pi_*e^{H \mathcal{K}^{MW}}\{X\}$, we get an expression in quadratic forms, an element of $\hat{\widetilde{CH}}_0(k/k)=\GW(k)$. This is the quadratic Euler characteristic of $X$ as we see in Theorem \ref{singGB}.
\end{Ex}

\begin{Rem}
	For $X$ proper, the Borel-Moore pro-homology groups in \emph{Definition II} agree with the ones in \emph{Definition I} (see Lemma \ref{lemmaprogroups}), and then also the two definitions of the Euler class coincide.
	However, for a non-proper $X$, the two definitions of Euler class take values in different groups.  The motivic Gauss-Bonnet formula we prove in section~\ref{section:GaussBonnet} is valid for both definitions.
\end{Rem}

\section{The pro-Euler class} \label{sec:pro}

\subsection{Motivic pro-homology}

We define here pro-homology groups for a bivariant theory, and describe how to get well-defined classes in them. {The discussion here is identical to \cite[sections 2,3]{Alu}, where here we use an arbitrary Borel-Moore homology theory $E(-)$ instead of Chow groups; the existence of a good theory of proper pushforward for $E$-Borel-Moore homology allows us to use Aluffi's arguments in this more general setting}.

\subsubsection*{Closures and completions}

We repeat here the definitions as in Aluffi \cite[Section 2]{Alu}.

\begin{Def} Let $X$ be in $\Sch_k^{red}$.\\[5pt] \label{Defgoodclosure}
	1. A morphism $i:X\hookrightarrow Z^{i}$ in $\Sch_k$ with $Z^{i}$ proper over $k$ and $i$ an open immersion is called a {\em completion} of $X${; if $i(X)$ is dense in $Z^i$, we call $i$ a {\em closure} of $X$.}  If  $i:X \hookrightarrow Z^i$ is a closure of $X$ such that $Z^i$ is smooth over $k$ and the complement $Z^i\setminus i(X)$ is a simple normal crossing divisor (see Definition \ref{Defncd}), we call $i$ a {\em good closure} of $X$. \\[2pt]
	2.  Let $\mathcal{C}^*(X)$ be the category with objects the completions  $i:X\rightarrow Z^{i}$ and where a morphism $\pi:i\to j$ is a proper morphism $\pi:Z^i\to Z^j$ such that the diagram
	\[
	\begin{tikzcd} [row sep = tiny, column sep = large]
		& Z^{i} \arrow[dd , "\pi"]\\
		X \arrow[dr, "j"] \arrow[ur ,"i"] & \\
		& Z^{j}
	\end{tikzcd}
	\]
	commutes. Let $\mathcal{C}(X)$ be the full subcategory of $\mathcal{C}^*(X)$ with objects the closures of $X$ and  let $\mathcal{C}^g(X)$ be the full subcategory of $\mathcal{C}(X)$ with objects the good closures of $X$.\\[2pt]
	3. Let $f:Y\to X$ be a morphism in $\Sch_k$, let $i:X\to Z^i$, $j:Y\to W^j$ be closures of $X$ and $Y$, respectively. A morphism $\pi:W^j\to Z^i$ is a {\em morphism of closures over $f$} if 
	the diagram
	\[
	\xymatrix{ 
		Y\ar[r]^j\ar[d]_f&W^j\ar[d]^\pi \\
		X\ar[r]^{i}   & Z^{i}
	}
	\]
	commutes. 
\end{Def}

\begin{Lemma} \label{cofinal}
	1. $\mathcal{C}^*(X)$ is left-filtering and $\mathcal{C}(X)\subset \mathcal{C}^*(X)$ is a left cofinal subcategory.\\[2pt]
	2. For each pair of closures $i:X\rightarrow Z^{i}$, $j:X\rightarrow Z^j$ there is at most one morphism $\pi:i\to j$ in $\mathcal{C}(X)$.\\[2pt]
	3. Let $f:Y\to X$ be a morphism in $\Sch_k$ and let $i:X\to Z^i$ be a closure of $X$. For $j:Y\to W^j$ a closure of $Y$, there is at most one morphism of closures $\pi:W^j\to Z^i$ {over $f$}. Moreover, the subcategory  $\mathcal{C}(Y)/(i,f)$ of  $\mathcal{C}(Y)$ with objects the closures  $j:Y\to W^j$ such that there exists a morphism of closures {$j\to i$ over $f$} is left cofinal in $\mathcal{C}(Y)$.\\[2pt]
	4. Suppose that $k$ admits resolution (see Definition \ref{resolutionsing}) of singularities and $X$ is smooth over $k$.  Then (1), (2) and (3) hold with 
	$\mathcal{C}(X)$ replaced by  $\mathcal{C}^g(X)$.
\end{Lemma}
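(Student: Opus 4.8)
The proof is the one given by Aluffi in \cite[Section~2]{Alu}; apart from two geometric inputs everything is formal category theory, so I would organise it around those inputs. The first is a \emph{rigidity} statement: every scheme occurring as a closure is reduced, every $k$-scheme occurring as a completion is separated over $k$, and a morphism out of a reduced scheme to a separated $k$-scheme is determined by its restriction to any dense open subscheme. The second is a \emph{compactness} statement: fibre products of proper $k$-schemes are proper, and closed subschemes of proper $k$-schemes are proper. Part (2) is immediate from rigidity: if $\pi,\pi':Z^i\to Z^j$ are both morphisms of closures then $\pi\circ i=j=\pi'\circ i$, so $\pi$ and $\pi'$ agree on the dense subscheme $i(X)\subseteq Z^i$, hence $\pi=\pi'$ since $Z^j$ is separated over $k$. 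The uniqueness clause of part (3) is the same argument with $\pi\circ j=i\circ f=\pi'\circ j$ on the dense $j(Y)\subseteq W^j$.

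\textbf{Part (1).}
The category $\mathcal{C}^*(X)$ is nonempty by Nagata compactification. Given two completions $i:X\to Z^i$, $j:X\to Z^j$, the morphism $(i,j):X\to Z^i\times_k Z^j$ is an immersion (it is the closed immersion $\Delta_X$ followed by the open immersion $i\times j$), so the reduced closure $Z^k$ of its image is proper over $k$, contains $X$ as a dense open subscheme, and the two projections are morphisms $Z^k\to Z^i$, $Z^k\to Z^j$ of completions; in fact $Z^k\in\mathcal{C}(X)$. This gives the common-lower-bound property of a cofiltered category. For the equalizer property, given parallel arrows $\pi,\pi':i\to j$ the equalizer subscheme $\operatorname{Eq}(\pi,\pi')\hookrightarrow Z^i$ is closed (as $Z^j$ is separated), hence proper over $k$, and $i$ factors through it as an open immersion, giving a completion over $i$ that equalizes $\pi$ and $\pi'$. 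Thus $\mathcal{C}^*(X)$ is left-filtering. For the inclusion $\mathcal{C}(X)\subseteq\mathcal{C}^*(X)$: for any completion $i:X\to Z^i$ the reduced closure $\overline{i(X)}\subseteq Z^i$ is a closure of $X$ mapping to $i$, so the relevant comma category is nonempty, and it is connected by combining the common-lower-bound property in $\mathcal{C}(X)$ with the uniqueness from part (2). Hence $\mathcal{C}(X)$ is left cofinal.

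\textbf{Part (3), cofinality clause.}
Given a closure $j:Y\to W^j$, form $(j,\,i\circ f):Y\to W^j\times_k Z^i$, an immersion, and let $W'$ be the reduced closure of its image: then $W'$ is a closure of $Y$, the first projection $W'\to W^j$ is a morphism in $\mathcal{C}(Y)$, and the second projection $W'\to Z^i$ is a morphism of closures over $f$, so $W'\in\mathcal{C}(Y)/(i,f)$ dominates $j$. This gives nonemptiness of the comma category $\mathcal{C}(Y)/(i,f)$ over any object of $\mathcal{C}(Y)$; connectedness follows by a zigzag: two objects of $\mathcal{C}(Y)/(i,f)$ over a given $j$ are dominated by a common object of $\mathcal{C}(Y)$ (part (1)), which in turn is dominated by an object of $\mathcal{C}(Y)/(i,f)$, and the two resulting triangles over $j$ commute automatically by the uniqueness in part (2). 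Hence $\mathcal{C}(Y)/(i,f)$ is left cofinal in $\mathcal{C}(Y)$.

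\textbf{Part (4).}
Here the extra ingredient is resolution of singularities. If $X$ is smooth then, being an open subscheme of $Z^i$, it lies in the smooth locus of $Z^i$; applying resolution of singularities to $Z^i$ and then embedded resolution to the complement of $X$ produces a proper birational $\pi:\tilde Z\to Z^i$ that is an isomorphism over $X$, with $\tilde Z$ smooth and proper over $k$ and $\tilde Z\setminus X$ a simple normal crossings divisor; thus $\tilde Z$ is a good closure of $X$ dominating $i$. With this at hand, (1)--(3) carry over verbatim with $\mathcal{C}(X)$ replaced by $\mathcal{C}^g(X)$: the common-lower-bound property is obtained by resolving the product closure $Z^k$ of part (1); uniqueness is inherited from part (2) since good closures are closures; and left cofinality of $\mathcal{C}^g(X)$ in $\mathcal{C}(X)$ follows from nonemptiness of the comma categories (just shown) together with the same connectedness zigzag. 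The one genuinely non-formal point — and the step I expect to require the most care — is exactly this last geometric claim: that the resolution can be chosen to be an isomorphism over the prescribed smooth open $X$ while simultaneously turning the boundary into a simple normal crossings divisor. This is the strong (functorial) form of resolution of singularities, which is part of the standing hypothesis on $k$ (see Appendix~\ref{section:resofsingandweakfac}).
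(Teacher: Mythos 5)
Your proof is correct and follows essentially the same route as the paper: uniqueness comes from density of $X$ in a closure together with separatedness of the target; common lower bounds come from reduced closures of images in fiber products over $k$; and embedded resolution over $X$ (Definition~\ref{resolutionsing}) supplies part~(4). The only cosmetic difference is that for the parallel-pair condition of left-filteredness the paper does not form the scheme-theoretic equalizer but instead restricts along $Z^h=\overline{i(X)}\hookrightarrow Z^i$, where the two composites to $Z^j$ automatically coincide --- both variants are sound, yours just needs the extra (easy) observation that $X\to \operatorname{Eq}(\pi,\pi')$ is an open immersion.
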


\begin{proof} The compactification theorem of Nagata (see e.g. \cite[Theorem 38.33.8]{stacks-project} ) implies there exists a closure of $X$ {so $\mathcal{C}(X)$ and $\mathcal{C}^*(X)$ are non-empty}. {Given completions $i:X\to Z^i$,  $j:X\to Z^j$ and two morphisms $f_1, f_2:i\to j$, let 
		$Z^h\subset Z^i$ be the closure of $i(X)$, with $h:X\to Z^h$, $g:Z^h\to Z^i$ the induced morphisms. Then $h$ is a closure of $X$ and $g$ defines a morphism of completions $g:h\to i$.}
	Since $X$ is dense in $Z^h$ and $Z^j$ is separated over $k$, we have $f_1\circ g=f_2\circ g$. This proves (1). A similar argument proves (2) and gives the uniqueness assertion in (3). Given $f:Y\to X$, $i:X\to Z^i$ and $j:Y\to W^j$ as in (3), let $W^h$ be the closure of $(j, i\circ f)(Y)$ in $W^j\times_k Z^i$, then the induced map $h:Y\to W^h$ is a closure of $Y$ with morphism $p_2:W^h\to Z^i$ of closures over $f$ and morphism $p_1:W^h\to W^j$ of closures of $Y$. This proves the second part of (3).
	
	For (4), resolution of singularities says that for each closure $i:X\to Z^i$ of $X$ there is a good closure $j:X\to Z^j$ and a morphism of closures $\pi:i\to j$. Then assertion (4) follows from this together with (1)-(3).
\end{proof}
\subsubsection*{Motivic Borel-Moore pro-homology} 


 We define the pro-homology group of $X$ as the limit of inverse system of Borel-Moore groups of the completions, in a similar fashion to the way Aluffi defines pro-Chow groups.

\begin{Def}  Let  $E:\Sch_k'\to \sA$ be a Borel-Moore homology functor and let $X\in \Sch_k^{red}$. Sending a completion $i:X\to Z^i$ to $E(Z^i)$ and a morphism $\pi:(j:X\to Z^j)\to (i:X\to Z^i)$ to the map $\pi_*:E(Z^j)\to E(Z^i)$ defines a functor
	\[
	E(-):\mathcal{C}^*(X)\to \mathcal{A}.
	\]
	Define the Borel-Moore $E$-pro-homology group of $X$ by 
	\[
	\hat{E}(X)=\lim_{i\in\mathcal{C}^*(X)}E(Z^{i}).
	\]
\end{Def}

It is enough to consider only (good) closures instead of all completions; we have naturally defined proper pushforward maps on pro-homology groups, making this a Borel-Moore homology theory in the sense of Definition \ref{BMhomologyDef}, as is assured by the following lemma.

\begin{Lemma}  \label{lemmaprogroups} 1. The canonical map 
	\[
	\lim_{i\in\mathcal{C}(X)}E(Z^{i})\to \hat{E}(X)
	\]
	is an isomorphism. If $X$ is smooth over $k$, and $k$ admits resolution of singularities, the canonical map 
	\[
	\lim_{i\in\mathcal{C}^g(X)}E(Z^{i})\to  \lim_{i\in\mathcal{C}(X)}E(Z^{i})
	\]
	is an isomorphism as well.\\
	2. Given any morphism $f:Y\to X$ in $\Sch_k$, the morphisms $\pi_*:E(W^{j})\to E(Z^{i})$ for each morphism $\pi$ of $j:Y\to W^j$ to $i:X\to Z^i$ give rise to a well-defined morphism
	\[
	f_*:=\hat{E}(f):\hat{E}(Y)\to \hat{E}(X).
	\]
	Moreover, we have $(fg)_*=f_*g_*$ for composable morphism $g:W\to Y$, $f:Y\to X$. \\
	3. If $X$ is proper over $k$, there is a canonical isomorphism $\hat{E}(X) \cong E(X) $. \\
		4. If $f : Y \to X$ is a proper map of proper schemes over $k$, the induced homomorphism
	\[
	f_* : E(Y) \cong \hat{E}(Y) \longrightarrow \hat{E}(X) \cong E(X)
	\]
	is the usual pushforward in the Borel-Moore homology functor $E$. 
\end{Lemma}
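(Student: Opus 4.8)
The plan is to treat the four parts in order, since each relies on the preceding ones. For part (1), the claim is purely a statement about cofinal subcategories of an indexing diagram: $\mathcal{C}(X)\subset\mathcal{C}^*(X)$ is left cofinal by Lemma \ref{cofinal}(1), and if $X$ is smooth (and $k$ admits resolution) then $\mathcal{C}^g(X)\subset\mathcal{C}(X)$ is left cofinal by Lemma \ref{cofinal}(4). The standard fact that a limit over a diagram restricts to an isomorphism onto the limit over a cofinal subdiagram then gives both isomorphisms immediately; I would simply invoke this, perhaps recording that ``left cofinal'' is exactly the condition needed for limits (as opposed to right cofinal for colimits). No real obstacle here.

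For part (2), I would use Lemma \ref{cofinal}(3): given $f:Y\to X$, the closures $j:Y\to W^j$ admitting a (necessarily unique) morphism of closures $\pi:W^j\to Z^i$ over $f$ are left cofinal in $\mathcal{C}(Y)$ for each fixed closure $i$ of $X$. So for each $i\in\mathcal{C}(X)$, one gets a well-defined map $\hat E(Y)=\lim_{j\in\mathcal{C}(Y)}E(W^j)\to E(Z^i)$ by composing the projection to the cofinal index $j$ with $\pi_*$; uniqueness of $\pi$ makes this independent of the chosen $j$, and functoriality of $E$ makes these compatible as $i$ varies over $\mathcal{C}(X)$, hence they assemble to $f_*:\hat E(Y)\to\hat E(X)$. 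Compatibility with composition, $(fg)_*=f_*g_*$, follows because a morphism of closures over $f$ composed with one over $g$ is one over $fg$, again using uniqueness to match the two constructions; I expect this to be the most bookkeeping-heavy step, but it is routine given Lemma \ref{cofinal}.

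Part (3) is the key point and where I expect the only genuine content to lie: if $X$ is already proper over $k$, then the identity $\mathrm{id}:X\to X$ is itself a completion (indeed a closure) of $X$, and I claim it is a terminal object of $\mathcal{C}^*(X)$ — for any completion $i:X\to Z^i$, the open immersion $i$ together with properness of $X$ forces $i$ to be an isomorphism onto a closed (being proper) and open (being an open immersion) subscheme, which on the connected components containing the image is all of $Z^i$; more carefully, $i$ is proper (as $X$ and $Z^i$ are both proper over $k$) and an open immersion, hence an isomorphism onto an open-and-closed subscheme, so $(\mathrm{id}_X)$ receives a unique morphism from every object of $\mathcal{C}^*(X)$. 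Wait—I should be careful about direction: in $\mathcal{C}^*(X)$ morphisms go $\pi:i\to j$ covering $Z^i\to Z^j$, so I want $(\mathrm{id}_X)$ to be initial, receiving a unique map \emph{to} every $i$; but the retraction $Z^i\to X$ splitting $i$ need not exist in general. The cleaner route, which I would actually take, is: by part (1) it suffices to compute $\lim_{\mathcal{C}(X)}E(Z^i)$, and $(\mathrm{id}_X)\in\mathcal{C}(X)$ is terminal in $\mathcal{C}(X)$ because any closure $i:X\to Z^i$ with $X$ proper has dense image which is also closed, hence $i(X)=Z^i$ and $i$ is an isomorphism, so every object of $\mathcal{C}(X)$ is canonically isomorphic to $(\mathrm{id}_X)$ and the limit is $E(X)$. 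This is the real argument; the subtlety is only in confirming properness of $X$ forces closures to be isomorphisms, which is elementary.

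Finally, part (4) follows by unwinding part (2) in the situation of part (3): when $X$ and $Y$ are both proper, the terminal closures $(\mathrm{id}_X)$ and $(\mathrm{id}_Y)$ are final in $\mathcal{C}(X)$, $\mathcal{C}(Y)$, and the morphism of closures over $f$ between them is just $f:Y\to X$ itself; chasing through the definition of $f_*$ in part (2) under the identifications of part (3), the map $\hat E(Y)\to\hat E(X)$ becomes literally $f_*=E(f):E(Y)\to E(X)$. I would present this as a short diagram chase, with the only thing to check being that the identifications of part (3) are compatible with the structure maps used to define $f_*$, which is immediate from their construction.
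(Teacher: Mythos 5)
Your proposal follows essentially the same route as the paper (which compresses the whole argument into: cite Lemma~\ref{cofinal} for parts 1--2; for parts 3--4 observe that when $X$ is proper, $\id_X$ is an initial object of $\mathcal{C}(X)$). Parts 1, 2 and 4 are handled correctly, and your part 3 lands in the right place, but the detour there reveals a small confusion worth fixing. For the limit $\hat E(X)=\lim_{i\in\mathcal{C}^*(X)}E(Z^i)$ to collapse to $E(X)$ you want $\id_X$ to be \emph{initial}, not terminal (the limit of a functor over a category with an initial object is its value there). And in fact $\id_X$ is already initial in all of $\mathcal{C}^*(X)$: a morphism $\id_X\to i$ is, by the definition of $\mathcal{C}^*(X)$, a proper map $X\to Z^i$ commuting with the two structure maps, i.e.\ $i$ itself; and $i$ is automatically proper once $X$ is proper over $k$ and $Z^i$ is separated over $k$. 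Uniqueness is the content of Lemma~\ref{cofinal}(2). So the worry about a retraction $Z^i\to X$ (which would make $\id_X$ terminal, a different and false claim for general completions) is a red herring, and passing through part (1) to restrict to $\mathcal{C}(X)$ first is unnecessary. Your replacement argument --- in $\mathcal{C}(X)$ the image $i(X)$ is dense by definition and closed once $X$ is proper, so every closure is an isomorphism and the category is a contractible groupoid --- is correct and is in effect a more concrete rendering of the same fact; I would just replace the word ``terminal'' by ``initial'' (or ``equivalent to a point'') so the reason the limit computes to $E(X)$ is stated in the right direction.
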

\begin{proof}For 1. and 2. use Lemma~\ref{cofinal}. For 3. and 4. note that when $X$ is proper over $k$, $id:X \to X$ is an initial element in the category $\sC (X)$.
	\end{proof}

Let now $E \in \SH(k)$ be a motivic ring spectrum. We consider the motivic Borel-Moore functor $ X \mapsto E_*(X/k)$ defined by $E$ as in Example \ref{exmotBH}. By the above construction we get a new Borel-Moore homology functor
\[
X \mapsto \hat{E}_*(X) \] which we call the {\it motivic Borel-Moore pro-homology functor}.
\begin{Ex}
	 For $E = H \mathbb{Z}$,  using the fact that $H \mathbb{Z} _n (X/k) = CH_n(X/k)$, $\hat{E}_n(X) = \hat{CH}_n(X) $ give the proChow groups defined by Aluffi \cite[Def. 2.2]{Alu}.
\end{Ex}

	

\subsection{Motivic pro-Euler class}
	
		\subsubsection*{Outline} \label {proclass}
	
	We describe here how to get a well defined class $c\{X\}$ in Borel-Moore pro-homology for every variety $X$, and we show that in fact all we need is the classes on smooth proper varieties to satisfy Axiom \ref{Axiom}. 	We start here from recalling our purpose, and going the reverse way to find what is required from the classes on smooth varieties we begin with. The first two steps are similar to Aluffi's construction in \cite{Alu} and we give the details in the appendix. Step 3 which is special to our construction is detailed below in the rest of this section.
\begin{description}
	\item[Step I:] Getting a well-defined pro-class $c\{X\}$ for any variety $X$, from well defined classes $c\{U\}$ for smooth varieties $U$. Since for each such $X$ we have a stratification $X = \amalg U_i$, with the $U_i$ smooth, we must have $c\{X\} = \sum_{i\in I}\alpha_{i*}(c\{U_i\})$, and the class must not depend on the stratification.	To ensure that, $c\{U\}$ has to satisfy the cut and paste property, as formulated in Proposition \ref{classvariety} (and so, to factor through the Grothendieck ring of varieties).
\item[Step II:] Getting a pro-class $c\{U\} \in \hat{E}(U)$ for a smooth $U$ from a family of classes $c_U^{\bar{U}} \in E(\bar{U})$ for good closures $U \hookrightarrow \bar{U}$. The condition to ensure that is called \emph{good local data} as in \cite{Alu} and is formulated in Proposition \ref{goodlocaldata}.
\item[Step III:] Defining $c_U^{\bar{U}}$ as alternating sums of classes for smooth strata in $\bar{U}$, when we have classes $c(U)$ for smooth and proper $U$; then showing that this defines a good local data, given  Axiom \ref{Axiom}. This is detailed in the rest of the section; see Theorem \ref{cdeftheorem} below.
\end{description}	
	Since the motivic Euler class for smooth proper $U$, $e^E(U)$ satisfies Axiom \ref{Axiom}, as we saw in Theorem \ref{Eulerblowup}, we obtain this way a motivic pro-Euler class $e^E\{X\} \in \hat{E}_0(X/k)$, which is the main construction of this paper.

	\subsubsection*{Alternating sums of classes of the strata}
	
	We start with given classes $c(U)$ for each smooth and proper $U$ over $k$. We show that our condition, Axiom~\ref{Axiom}, ensures that those classes extend to define a class $c\{X\}$ for each $X\in \Sch^{red}_k$.

	\begin{Def} \label{cdef} Let $\Sm^p_k\subset \Sm_k$ be the full subcategory of smooth and proper $k$-schemes.  Suppose we are given an element  $c(X) \in E(X/k)$ for each  $X\in \Sm^p_k$. We suppose in addition that  if $X=X_1\amalg X_2$ is a disjoint union of open subschemes with inclusion maps $i_j:X_j\to X$, then   $c(X)=i_{1*}(c(X_1))+i_{2*}(c(X_2))$. We call such a family $\{c(X)\in E(X/k)\}_{X\in \Sm^p_k}$ an {\em additive family of characteristic classes for smooth, proper $k$-schemes.}
		
		Let $(U,\bar{U})$ a pair of $U \in \Sch_k$ and a good closure $i: U \hookrightarrow \bar{U}$.
		{Let $D_1,\ldots, D_r$ be the irreducible components of $D$.  For $\emptyset \neq I \subset \{1,...,r\}$, define} $D_I = \cap_{i\in I} D_i$, with closed immersion $ i_{D_I} : D_I \hookrightarrow \bar{U}$. Set $D_\emptyset = \bar{U}$ {and let $|I|$ denote the cardinality of $I$}. We define
		\[
		c_{U}^{\bar{U}}=\sum_{I \subset \mathcal{P}(\{1,...,r\})} (-1)^{|I|}i_{D_I *} c(D_I)  \in E(\bar{U}).
		\]
		\end{Def}
	
	\begin{Lemma}  \label{lemma1}
		{Let $\{c(X)\in E(X/k)\}_{X\in \Sm^p_k}$ an additive family of characteristic classes for smooth projective  $k$-schemes, and form the classes $c^{\bar{U}}_U\in E(\bar{U})$ following Definition~\ref{cdef}.  Take $V\in \Sm_k$ and let $V \hookrightarrow \bar{V} $ be a good closure. Let $\tilde{D}=\bar{V}\setminus V$. Let $F \subset \bar{V}$ be a smooth divisor such that $F+\tilde{D}$ is a simple normal crossing divisor on $\bar{V}$; equivalently, $F$ intersects $\tilde{D}$ with normal crossing.  Let $E=V\cap F$.  Then $\bar{V}$ is a good closure of $V\setminus E$,  $F$ is a good closure of $E$, and we have
			\[
			c_{V\setminus E}^{\bar{V}} = c_V^{\bar{V}} - i_* c_{E}^F .
			\] }
	\end{Lemma}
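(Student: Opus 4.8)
The plan is to compute $c_{V\setminus E}^{\bar V}$ directly from Definition~\ref{cdef}, using the combinatorics of the divisor $F+\tilde D$ on $\bar V$, and compare term by term with $c_V^{\bar V}$ and $i_*c_E^F$. Write $\tilde D = D_1\cup\cdots\cup D_r$ for the irreducible components, so that $F, D_1,\ldots,D_r$ are the components of $F+\tilde D$. Since $V\setminus E = \bar V\setminus(F+\tilde D)$, the class $c_{V\setminus E}^{\bar V}$ is an alternating sum over subsets $J\subset\{F,D_1,\ldots,D_r\}$. I would split this sum according to whether $F\in J$ or not. The subsets not containing $F$ are exactly the subsets $I\subset\{1,\ldots,r\}$, and those contribute precisely $\sum_I(-1)^{|I|}i_{D_I*}c(D_I) = c_V^{\bar V}$. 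It remains to show that the subsets containing $F$ contribute $-i_*c_E^F$.

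For the second half: a subset $J$ with $F\in J$ has the form $J=\{F\}\cup I$ with $I\subset\{1,\ldots,r\}$, and the corresponding stratum is $D_J = F\cap D_I$. The key geometric input (part of the hypothesis that $F+\tilde D$ is a simple normal crossing divisor, together with Lemma~\ref{cofinal}(4) applied on $F$) is that, setting $E = V\cap F$, the trace $\tilde D\cap F = (D_1\cap F)\cup\cdots\cup(D_r\cap F)$ is a simple normal crossing divisor on $F$ with irreducible components among the $D_i\cap F$, making $F$ a good closure of $E$; and moreover $F\cap D_I$ is exactly the stratum $(D\cap F)_I$ of this divisor on $F$. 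Thus $\sum_{\emptyset\subsetneq J\ni F}(-1)^{|J|}i_{D_J*}c(D_J) = \sum_{I}(-1)^{|I|+1} i_{F\cap D_I *}c(F\cap D_I)$, where the $I=\emptyset$ term is $-i_{F*}c(F)$. Pushing forward along the closed immersion $i\colon F\hookrightarrow\bar V$ and using functoriality of pushforward ($i_{D_J*}$ factors through $i_{(D\cap F)_I*}$ followed by $i_*$), this sum equals $-i_*\bigl(\sum_I (-1)^{|I|} i_{(D\cap F)_I *}c(F\cap D_I)\bigr) = -i_*c_E^F$, by Definition~\ref{cdef} applied to the good closure $E\hookrightarrow F$. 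Adding the two halves gives the claimed identity.

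The statements that $\bar V$ is a good closure of $V\setminus E$ and $F$ is a good closure of $E$ are essentially bookkeeping: $\bar V\setminus(V\setminus E) = \tilde D\cup F$ which is SNC by hypothesis and $\bar V$ is smooth proper; and $F$ is smooth proper with $F\setminus E = F\cap\tilde D$ which is SNC on $F$ again by the normal-crossing hypothesis. I would dispatch these first, then do the indexed-sum computation.

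The main obstacle I anticipate is purely combinatorial/notational: correctly identifying the irreducible components of $\tilde D\cap F$ on $F$ with (a subset of) the $D_i\cap F$ and checking that no component is repeated or empty, so that the alternating-sum formula of Definition~\ref{cdef} for $c_E^F$ matches the $F$-containing terms of $c_{V\setminus E}^{\bar V}$ on the nose. In particular one must be slightly careful if some $D_i\cap F$ is empty (those $I$ simply contribute zero on both sides) or reducible (the normal-crossing hypothesis prevents the bad case where $D_i$ and $F$ are tangent, so each nonempty $D_i\cap F$ is a smooth divisor on $F$). Once the indexing is set up correctly, the rest is formal manipulation of alternating sums and functoriality of proper pushforward, with no appeal needed to Axiom~\ref{Axiom} — that axiom enters only later, in verifying the good-local-data condition.
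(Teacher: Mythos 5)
Your approach is the same as the paper's: compute $c_{V\setminus E}^{\bar V}$ from Definition~\ref{cdef} for the SNC divisor $F+\tilde D$ on $\bar V$, split the alternating sum into strata that involve $F$ and those that do not, and identify the two halves with $c_V^{\bar V}$ and $-i_*c_E^F$ respectively. The good-closure claims are handled first as you propose, and no appeal to Axiom~\ref{Axiom} is needed here.

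There is one small gap in the step where you compare the $F$-containing half with $-i_*c_E^F$. You correctly flag reducibility of $D_i\cap F$ as a potential issue, but your proposed resolution --- that the normal-crossing hypothesis forces each nonempty $D_i\cap F$ to be a \emph{smooth} divisor on $F$ --- does not by itself close the gap. A smooth divisor can have several irreducible components, and Definition~\ref{cdef} for $c_E^F$ sums over subsets of the \emph{irreducible} components of $\tilde D\cap F$, whereas your $F$-containing half is indexed by $I\subset\{1,\ldots,r\}$, i.e.\ by the (possibly reducible) traces $D_i\cap F$. To match the two alternating sums one needs the hypothesis, built into the notion of an \emph{additive} family of characteristic classes, that $c(X_1\amalg X_2)=i_{1*}c(X_1)+i_{2*}c(X_2)$; this is exactly how the paper justifies the first equality in its displayed identity. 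With that observation inserted, your proof is complete and coincides with the paper's.
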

	
	\begin{proof}
		{We have $\bar{V} \setminus (V \setminus E) = F+\tilde{D}$, a simple normal crossing divisor. Similarly, $F\setminus E=F\cap \tilde{D}$, and since $F$ intersects $\tilde{D}$ with normal crossing, $F\cap \tilde{D}$ is a simple normal crossing divisor on $F$. Thus $\bar{V}$ is a good closure of $V\setminus E$ and $F$ is a good closure of $E$.}
		
		{Suppose $\tilde{D}$ has irreducible components $\tilde{D}_1,\ldots, \tilde{D}_r$. We consider the smooth divisor $F$ as the ``component'' $G_0$ of the simple normal crossing divisor $G:=F+\tilde{D}$, and separate the strata $G_I$ of $G$ into those which involve $F$ and those which do not. This gives
			\[
			c_{V\setminus E}^{\bar{V}}=\sum_{I \subset \{1,...,r\}} (-1)^{|I|}i_{\tilde{D}_I *} c(\tilde{D}_I)+ 
			\sum_{I \subset  \{1,...,r\}} (-1)^{|I|+1}i_{\tilde{D}_I\cap F *} c(\tilde{D}_I \cap F)=
			c_V^{\bar{V}} - i_* c_E^F.
			\]
			The additivity of the classes $c(-)$ with respect to disjoint union justifies the  first identity.}
		\end{proof}
	The following Theorem justifies our main construction, The proof uses results and notions from the appendix.
	
	\begin{Th} \label{cdeftheorem}{Suppose that $k$ admits resolution of singularities and weak factorization. Let $\{c(X)\in E(X/k)\}_{X\in \Sm^p_k}$ be a family of characteristic classes for smooth, proper $k$-schemes, satisfying Axiom~\ref{Axiom}. Then \\
			1. For each $U\in \Sm_k$, there is a unique element $c\{U\}\in \hat{E}(U/k)$ such that, for each $\bar{U}\in \mathcal{C}^g(U)$, the  $(U,\bar{U})$-component of $c\{U\}$  is $c_U^{\bar{U}}\in E(\bar{U}/k)$, the class given by definition~\ref{cdef}.\\
			2. For each $X\in \Sch_k^{red}$, there is a unique element $c\{X\}\in \hat{E}(X/k)$ such that for each smooth stratification $\sU=\{\alpha_i:U_i\to X\mid i\in I\}$ of $X$, we have
			\[
			c\{X\}=\sum_{i\in I}\alpha_{i*}(c\{U\})\in \hat{E}(X/k).
			\]}
	\end{Th}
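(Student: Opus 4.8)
The plan is to prove the two parts in the order stated, since part 2 will depend on part 1. For part 1, the strategy is to verify that the family of classes $\{c_U^{\bar U}\in E(\bar U/k)\}_{\bar U\in\mathcal{C}^g(U)}$ constitutes \emph{good local data} in the sense of the cited proposition from the appendix (Aluffi's condition), so that the limit $c\{U\}\in\hat E(U/k)=\lim_{\bar U\in\mathcal{C}^g(U)}E(\bar U/k)$ exists and is unique with the prescribed components. Concretely, I must check the compatibility condition: given a morphism of good closures $\pi:\bar U'\to\bar U$ over $\mathrm{id}_U$, one has $\pi_*c_U^{\bar U'}=c_U^{\bar U}$. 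By weak factorization, $\pi$ factors as a sequence of blow-ups and blow-downs along smooth centres that lie in the boundary and meet the boundary normal-crossingly, so it suffices to treat a single such blow-up. This is exactly where Axiom~\ref{Axiom} enters: applying it to the blow-up square for the boundary stratum being modified, and bookkeeping the alternating sum over strata $D_I$ as in the proof of Lemma~\ref{lemma1}, the contributions of the exceptional divisor cancel and one is left with $\pi_*c_U^{\bar U'}=c_U^{\bar U}$. I expect the combinatorial bookkeeping of how the strata $D_I$ transform under a boundary blow-up — and matching each term against an instance of Axiom~\ref{Axiom} applied to $D_I$ or to a blow-up square over $D_I$ — to be the main obstacle; the induction on $|I|$ and the interplay with Lemma~\ref{lemma1} is the technical heart.

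For part 2, once $c\{U\}\in\hat E(U/k)$ is defined and known to factor through $K_0(\Var_{/U})$ for smooth $U$ (which follows from Proposition~\ref{cclass} together with part 1, since Axiom~\ref{Axiom} holds), I define, for $X\in\Sch_k^{red}$ and a smooth stratification $\sU=\{\alpha_i:U_i\to X\}$,
\[
c\{X\}:=\sum_{i\in I}\alpha_{i*}(c\{U_i\})\in\hat E(X/k),
\]
using the proper pushforward $\alpha_{i*}:\hat E(U_i/k)\to\hat E(X/k)$ from Lemma~\ref{lemmaprogroups}.2 (note $\alpha_i$ need not be proper as a morphism of varieties, but the closures are proper, so pushforward on pro-homology makes sense; this point must be stated carefully, and is precisely the reason the pro-construction is used). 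The content is independence of the choice of stratification. Any two smooth stratifications admit a common refinement, so it suffices to show that refining a single stratum $U$ into $U=\amalg_j W_j$ does not change the sum; this reduces to the identity $c\{U\}=\sum_j(\beta_j)_*(c\{W_j\})$ in $\hat E(U/k)$ for the inclusions $\beta_j:W_j\to U$, which is the cut-and-paste/additivity property of $c\{-\}$ on smooth varieties established via Proposition~\ref{cclass} (equivalently, it is the statement that $c\{U\}$ equals the image of $[\mathrm{id}_U]=\sum_j[\beta_j:W_j\to U]$ under $c_U\{-\}:K_0(\Var_{/U})\to\hat E(U/k)$). Uniqueness of $c\{X\}$ is immediate since a stratification exists and the displayed formula pins the class down.

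A few supporting points I would record along the way: first, every $X\in\Sch_k^{red}$ admits a finite stratification by smooth locally closed subschemes (induct on dimension, taking $U_1$ the smooth locus), so the definition is non-vacuous; second, compatibility of pushforward on $\hat E$ with composition, from Lemma~\ref{lemmaprogroups}.2, is what makes the refinement argument associative and well-posed; third, for the common-refinement step one should observe that the intersection of two stratifications is again a smooth stratification (finite intersections of locally closed smooth subschemes, after discarding empty pieces and further stratifying any non-smooth overlaps), so the reduction to a single refinement is legitimate. The genuinely new input relative to Aluffi is confined to part 1 — specifically, replacing his use of Chern classes of sheaves of log differentials by the alternating sum of Euler classes of strata and feeding Axiom~\ref{Axiom} into the weak-factorization argument — so I would present part 1 in detail and part 2 as a formal consequence of Proposition~\ref{cclass} and the properties of $\hat E$.
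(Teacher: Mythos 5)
Your part 1 is essentially the paper's argument: reduce, via weak factorization, the compatibility $\pi_*c_U^{\bar U'}=c_U^{\bar U}$ to a single blow-up of a good closure along a smooth centre contained in the boundary and meeting it with normal crossings (this is Lemma~\ref{classsmooth}), and then verify that case by applying Axiom~\ref{Axiom} stratum by stratum to the blow-up squares $\tilde D_I\cap F\to\tilde D_I$ over $D_I\cap W\to D_I$ and summing with signs as in Lemma~\ref{lemma1}; this is exactly the identity \eqref{BlowUpIdentity} and the $Z=\emptyset$ case of \eqref{gld} in the paper.

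The gap is in part 2. You claim the stratification formula follows from Proposition~\ref{cclass} applied to the functor $\hat E$, ``equivalently'' from $c\{U\}=c_U\{[\mathrm{id}_U]\}$ and $[\mathrm{id}_U]=\sum_j[\beta_j:W_j\to U]$. But Proposition~\ref{cclass} only identifies $c_U\{[f:Y\to U]\}$ with $f_*c\{Y\}$ when $f$ is \emph{proper}; for a locally closed (in particular open) immersion $\beta_j$, which is the case occurring in the statement of the theorem, the value $c_U\{[\beta_j]\}$ is only defined abstractly through Bittner's blow-up presentation, and nothing in Proposition~\ref{cclass} compares it with the pro-pushforward $\beta_{j*}c\{W_j\}$ along the non-proper map $\beta_j$ (which exists only because of the pro-construction, Lemma~\ref{lemmaprogroups}). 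The identity you actually need is the open--closed additivity $c\{U\}=i_*c\{Z\}+j_*c\{U\setminus Z\}$ in $\hat E(U/k)$ for $U$ smooth and $Z\subset U$ smooth closed, i.e.\ hypothesis \eqref{PropHypoth} of Proposition~\ref{classvariety}; only with that in hand does the factorization through $K_0(\Var_{/X})$ with respect to arbitrary maps, and hence independence of the stratification, follow. In the paper this additivity is not formal: it is proved by verifying the \emph{full} good-local-data condition \eqref{gld} for triples $(U,\bar U,W)$ in which the centre $W$ is \emph{not} contained in the boundary divisor (so $Z=W\cap U$ is dense in $W$, and the extra term $w_*c_Z^W$ appears), together with the cofinality statement of Lemma~\ref{TripleCover} which says such triples compute $\hat E(U)$. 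Your plan only ever checks blow-ups with centre inside the boundary, so \eqref{gld} with $Z\neq\emptyset$ is never established, and the common-refinement argument in part 2 has no basis. The fix is exactly the paper's: carry out the same stratumwise application of Axiom~\ref{Axiom} and Lemma~\ref{lemma1} for centres $W$ meeting $U$, conclude good local data in the sense of Definition~\ref{goodlocaldata}, and then invoke Proposition~\ref{goodclass} (which packages Lemma~\ref{classsmooth}, Lemma~\ref{TripleCover} and Proposition~\ref{classvariety}) for both parts of the theorem.
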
 
	
	\begin{proof}
		{To prove (1) and (2), it suffices by Proposition~\ref{goodclass} to show that the family of classes $\{ c_U^{\bar{U}} \in E(\bar{U}/k)\}_{U\in \Sm_k, \bar{U}\in \mathcal{C}^g(U)}$ forms good local data, as in Definition \ref{goodlocaldata}. Retaining the notation of that definition, we need to verify the additivity property \eqref{additiivity} and the identity \eqref{gld}.}
		
		First, by satisfying Axiom \ref{Axiom}, the family $\{c(X)\in E(X/k)\}_{X\in \Sm^p_k}$ is an additive family in the sense defined above, since By Bittner's theorem this axiom implies that the class factors through the Grothendieck group of varieties. Then the additivity property  \eqref{additiivity} follows immediately.
		
		To verify \eqref{gld}, let $(U, \bar{U}, W)$ be a triple as in Definition~\ref{goodlocaldata}, let $\pi:\bar{V}\to \bar{U}$ be the blow-up $Bl_W\bar{U}$ with exceptional divisor $F\subset \bar{V}$, let $w:W\to \bar{U}$, $f:F\to \bar{V}$ be the inclusions, let $Z=U\cap W$, $V=\pi^{-1}(U)$, and let $E=V\cap F$. Let $D$ be the simple normal crossing divisor $\bar{U}\setminus U$ on $\bar{U}$ and let $\tilde{D}$ be the proper transform $p^{-1}[D]$. 
		
		{Let $D_1,\ldots, D_r$ be the irreducible components of $D$ and let $\tilde{D}_i$ be the proper transform $\pi^{-1}[D_i]$.  For $I\subset \{1,\ldots, r\}$ we have the strata $D_I:=\cap_{i\in I}D_i$ and $\tilde{D}_I:=\cap_{i\in I}\tilde{D}_i$.  Let $w_I:W\cap D_I\to D_I$, $f_I:F\cap \tilde{D}_I\to \tilde{D}_I$ be the inclusions and let $\pi_I:\tilde{D}_I\to D_I$ be the restriction of $\pi$.  We claim that
			\begin{equation}\label{BlowUpIdentity}
				c(D_I) - w_{I*} c(D_I \cap W) = \pi_{I*} (c(\tilde{D}_I) - f_{I*} c(\tilde{D}_I \cap F)) 
			\end{equation}
			for each $I\subset  \{1,\ldots, r\}$. Indeed, for $I=\emptyset$, we have the blowup square}
		\[
		\begin{tikzcd}
			F\ar[d, "r"]\ar[r, hook, "j"] & \bar{V}\ar[d, "\pi"]\\
			W\ar[r, hook, "w"] & \bar{U}
		\end{tikzcd}
		\]
		{and since $D_\emptyset=\bar{U}$, $\tilde{D}_\emptyset=\bar{V}$ $\pi_\emptyset=\pi$, $f_\emptyset=f$,  the identity follows from Axiom~\ref{Axiom} applied to this blowup square. For non-empty $I$, the fact that $W$ intersects $D$ with normal crossing   implies that }
		\[
		\begin{tikzcd}
			\tilde{D}_I \cap F \ar[d, "r"]\ar[r, hook, "j"] &\tilde{D}_I \ar[d, "\pi"]\\
			D_I \cap W\ar[r, hook, "w"] & D_I
		\end{tikzcd}
		\]
		{is also a blowup square. Thus  Axiom~\ref{Axiom} applied to this blowup square gives the identity \eqref{BlowUpIdentity}. }
		
		{Since $\bar{V}$ is a good closure of $V\setminus E$ with $\tilde{D}+F=\bar{V}\setminus(V\setminus E)$, we have
			\[
			c_V^{\bar{V}}=\sum_{I \subset  \{1,...,r\}} (-1)^{|I|}i_{\tilde{D}_I*} c(\tilde{D}_I),
			\]
			where $i_{\tilde{D}_I}:\tilde{D}_I\to \bar{V}$ is the inclusion. Similarly,
			\[
			c_E^F=\sum_{I \subset  \{1,...,r\}} (-1)^{|I|}i_{\tilde{D}_I\cap F*} c(\tilde{D}_I\cap F),
			\]
			where $i_{\tilde{D}_I\cap F}:\tilde{D}_I\cap F\to F$ is the inclusion,
			\[
			c_U^{\bar{U}}=\sum_{I \subset  \{1,...,r\}} (-1)^{|I|}i_{D_I*} c(D_I),
			\]
			where $i_{D_I}:D_I\to \bar{U}$ is the inclusion and
			\[
			c_Z^W=\sum_{I \subset  \{1,...,r\}} (-1)^{|I|}i_{D_I\cap W*} c(D_I\cap W), 
			\]
			where $i_{D_I\cap W}:D_I\cap W\to W$ is the inclusion.}
			{Applying Lemma~\ref{lemma1} to the blowup square for $D_I \cap W\subset D_I$ gives the identity 
			\[
			\pi_{I*}(c(\tilde{D}_I)-f_{I*}(c(\tilde{D}_I\cap F) )=
			c(D_I)-w_{I*}(c(D_I\cap W))
			\]
			for each $I\subset \{1,\ldots,r\}$. Using the functoriality of pushforward, we thus have
			\begin{align*}
				\pi_*(c_V^{\bar{V}}-f_*(c_E^F))& =  
				\sum_{I \subset  \{1,...,r\}} (-1)^{|I|}i_{D_I*}( \pi_{I*}(c(\tilde{D}_I)-f_{I*}(c(\tilde{D}_I\cap F) )\\
				& =  
				\sum_{I \subset  \{1,...,r\}} (-1)^{|I|}i_{D_I*}(c(D_I)-w_{I*}(c(D_I\cap W))\\
				&=c_U^{\bar{U}}-w_* c_Z^W.
		\end{align*}}
		This verifies the identity \eqref{gld}.  
	\end{proof}
	
	

	\subsubsection*{The pro-Euler class} \label{ssec:proeulerclass}
	
	\begin{Def} \label{Defproeulerclass}
		Let $X \in \Sch_k^{red}$ and let $E \in \SH(k)$ a motivic ring spectrum. We define the pro-Euler class of $X$,  $e^E\{X\}$, as the unique pro-class in $\hat{E}_0(X)$ extending, according to Theorem \ref{cdeftheorem}, the Euler class of the tangent bundle $e^E(X) \in E_0(X/k) \simeq \hat{E}_0(X)$ for smooth and proper schemes. The condition of Theorem \ref{cdeftheorem} is satisfied by Theorem~\ref{Eulerblowup}. We also have the compatible maps on Grothendieck groups according to Proposition \ref{cclass}.
	\end{Def}

\begin{Rem} \label{Alufficomp}
	Definition \ref{Defproeulerclass} above gives a generalization of the class defined by Aluffi in \cite[Definition 4.1, Definition 4.4]{Alu} to motivic Borel-Moore homology. Let $E = H\mathbb{Z}$, then for $X$ smooth and proper our class $e^{H \mathbb{Z}}\{X\} \in \hat{E}_0(X/k) \simeq E_0(X/k) = CH_0(X/k)$ is the Euler class of the tangent bundle, which the is top Chern class, the same as Aluffi's class. For other $X$, the class $c_X^{\bar{X}} \in \hat{CH}_0(X/k)$ for a good compactification is the same as the one defined by Aluffi due to the cut-and-paste property that both classes satisfy (see \cite[Propsition 3.2]{Alu}), which makes the extension for all varieties unique. Therefore the class $e^{H \mathbb{Z}}\{X\}$ agrees with Aluffi's $\{X\}_0$ (\cite[Definition 4.4]{Alu}) for any $X \in \Sch_k^{red}$.
\end{Rem}

	

	\begin{Rem} \label{RemJSY}
		In the recent work \cite{JSY} Jin, Sun and Yang give an alternative approach for constructing a motivic pro-CSM class. The class they construct, $\hat{C}^c_X(\one_X)$, agrees with our class $e\{X\}$ for $X$ smooth. Then by Proposition \ref{cclass} the class $\hat{C}^c_{X}(X)$ agrees with our definition of $e\{X\}$ for any $X$. They prove independently that this class agrees with Aluffi's class on pro-Chow groups, see \cite[Theorem 3.4]{JSY}.
	\end{Rem}

\section{A singular Gauss-Bonnet formula} \label{section:GaussBonnet}

Here we define the motivic Euler characteristic with compact support, and prove how to extend the motivic  

\begin{Def} Let $(\mathcal{C}, \otimes, \one_\mathcal{C} )$ be a symmetric monoidal {category.}  Let $x$ be a strongly dualisable object of $\mathcal{C}$, with $x^\vee$ its dual, and $ \delta_x: \one_\mathcal{C} \rightarrow x \otimes x^\vee$, {$ev_x:  x^\vee \otimes x \rightarrow \one_\mathcal{C}  $  } the co-evaluation and evaluation maps respectively. 
		For $f:x\to x$ an endomorphism, the {\em trace} of $f$ is the element $tr(f)\in \End_\mathcal{C}(\one_\mathcal{C})$ defined as the composition 
	\[
	\one_\mathcal{C}\xrightarrow{\delta_x}x \otimes x^\vee \xrightarrow{f\otimes id} x \otimes x^\vee \xrightarrow{\tau}x^\vee \otimes x\xrightarrow{ev_x}\one_\mathcal{C} .
	\]
	In particular, taking $f=id_x$, we have the {\em categorical Euler characteristic} $\chi_\mathcal{C}(x):=tr_x(id_x)$.
\end{Def}

\begin{Def} \label{chicdef}
	Let $k$ be a field {of characteristic 0}, $q: X \rightarrow k$ a $k$-scheme and $\alpha \in \SH(X)$ a constructible, and hence strongly dualisable, object.
	Then $\chic(\alpha /k)$ is defined to be the categorical Euler characteristic of  $q_! \alpha $ in $\SH(k)$:
	\[ \chic(\alpha /k):=\chi_{\SH(k)}(q_! \alpha).
	\]
	In particular we define
	\[ \chic(X/k) := \chic(\one_X/k) =\chi(q_! \one_X):=tr(id_{q_! \one_X})\]
	for every finite type $k$-scheme $q: X \rightarrow \Spec k$. This is well defined as $q_! \one_X$ is strongly dualisable. {We write $\chic(\alpha)$ for $\chic(\alpha/k)$ when the base-field $k$ is clear from the context.} Since $\End_{\SH(k)}(\one_l) \simeq \GW(k)$, the value of the motivic Euler characteristic is an expression in quadratic forms and we also call it the \emph{quadratic} Euler characteristic. For more details see \cite[Section 1]{Le20}, \cite[Section 2]{Az}. 
\end{Def}
The Euler class $e\{X\}$ in this section may be any of the Euler classes constructed in this paper, either the one by \emph{Definition I} (\ref{Eulerclassex1}) or the one by \emph{Definition II} (\ref{Eulerclassex2}), and denote its Borel-Moore value group by $E(X)$.

\begin{Prop}[cut and paste]  Let $X, Y \in \Sch_k^{red}$.\\
	 	(a) $ \chi_{c}(X\amalg Y)=\chi_{c}(X)+\chi_{c}(Y)$. \\
		(b) $e\{X\amalg Y\}=i_X^*e\{X\}+i_Y^*e\{Y\}$ in ${E}_0(X \amalg Y)$ where $i_X$ and $i_Y$ are the closed immersions.
\end{Prop}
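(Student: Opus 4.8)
The plan is to prove the two statements separately, as (a) is a standard additivity property for the motivic Euler characteristic with compact support, while (b) follows from the additivity built into the construction of the Euler class.

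For part (a): Let $q_X : X \to \Spec k$ and $q_Y : Y \to \Spec k$ be the structure maps, and let $q : X \amalg Y \to \Spec k$ be the structure map of the disjoint union, with open-closed immersions $i_X : X \to X \amalg Y$ and $i_Y : Y \to X \amalg Y$. First I would observe that $\one_{X \amalg Y} \simeq i_{X!}\one_X \oplus i_{Y!}\one_Y$ in $\SH(X \amalg Y)$, since $X$ and $Y$ form a disjoint open (and closed) cover; concretely this is the gluing triangle for the clopen decomposition, which splits. Applying $q_!$ and using $q_! i_{X!} \simeq q_{X!}$, $q_! i_{Y!} \simeq q_{Y!}$, we get $q_!\one_{X \amalg Y} \simeq q_{X!}\one_X \oplus q_{Y!}\one_Y$ in $\SH(k)$. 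Now the categorical Euler characteristic is additive on direct sums of strongly dualisable objects: if $a, b$ are strongly dualisable then so is $a \oplus b$, with dual $a^\vee \oplus b^\vee$, and $\chi_{\SH(k)}(a \oplus b) = \chi_{\SH(k)}(a) + \chi_{\SH(k)}(b)$ since the co-evaluation and evaluation for $a \oplus b$ decompose block-diagonally and the cross terms vanish. Applying this with $a = q_{X!}\one_X$ and $b = q_{Y!}\one_Y$ gives $\chic(X \amalg Y) = \chic(X) + \chic(Y)$.

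For part (b): I would invoke the construction directly. For \emph{Definition I}, the class $e\langle X \amalg Y \rangle = c_{X \amalg Y}\{[\id]\}$, and since $K_0(\Var_{/-})$ is additive for disjoint unions and the class $c_{-}\{-\}$ is compatible with pushforward (and in particular the open immersions $i_X, i_Y$ induce the splitting $E_0((X\amalg Y)/k) \simeq E_0(X/k) \oplus E_0(Y/k)$ by additivity of the Borel-Moore homology functor, Definition \ref{BMhomologyDef}), the identity $e\langle X \amalg Y\rangle = i_{X*}e\langle X\rangle + i_{Y*}e\langle Y\rangle$ follows; note that under the disjoint-union splitting $i_X^* = i_{X*}$ since $i_X$ is both open and closed, so the formula as stated with $i_X^*, i_Y^*$ agrees. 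For \emph{Definition II}, Theorem \ref{cdeftheorem}(2) defines $e\{X \amalg Y\}$ via any smooth stratification; choosing a stratification of $X \amalg Y$ that refines the given clopen decomposition, i.e. the disjoint union of a stratification of $X$ and one of $Y$, the sum $\sum_{i} \alpha_{i*}(c\{U_i\})$ splits as the contribution from the strata in $X$ plus the contribution from the strata in $Y$, which are exactly $i_{X*}e\{X\}$ and $i_{Y*}e\{Y\}$ respectively (again $i_{X*} = i_X^*$ under the clopen splitting). Alternatively one can phrase this via the additive-family hypothesis in Definition \ref{cdef} together with Proposition \ref{cclass}.

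The main obstacle, such as it is, is bookkeeping rather than substance: one must be careful that for the open-and-closed immersions $i_X, i_Y$ the pullback $i_X^*$ appearing in the statement coincides with the pushforward $i_{X*}$ under the additive-splitting isomorphism $E_0((X \amalg Y)/k) \simeq E_0(X/k)\oplus E_0(Y/k)$ — this is immediate from the definition of additivity of a Borel-Moore homology functor, since $i_{X*}$ and $i_{Y*}$ realize that splitting and $i_X^*$ is the inverse projection. The only genuinely structural point is the block-diagonal behaviour of evaluation/co-evaluation under direct sums in part (a), which is a formal fact about symmetric monoidal categories with duals and requires no input specific to $\SH(k)$.
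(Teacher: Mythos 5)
Your proposal is correct. For part (b) it is essentially the paper's own argument: the paper simply points to the additivity/cut-and-paste property (Proposition~\ref{classvariety}, equivalently the factorization through $K_0(\Var_{/-})$ of Proposition~\ref{cclass}), and your two routes --- the relation $[\id_{X\amalg Y}]=[i_X]+[i_Y]$ in $K_0(\Var_{/X\amalg Y})$ combined with compatibility with proper pushforward for Definition~I, and the stratification formula of Theorem~\ref{cdeftheorem}(2) for Definition~II --- are exactly how that property yields the claimed identity. For part (a) the paper merely cites \cite[Proposition 2.11]{Az}, whereas you give a self-contained argument; your argument is legitimate and in fact simpler than what the general cut-and-paste statement requires, because for a clopen decomposition the localization triangle splits, so one only needs additivity of the categorical trace on direct sums of dualisable objects (a formal block-diagonal computation), rather than additivity of traces along arbitrary distinguished triangles, which is the genuinely delicate ingredient behind the general statement. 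One point of bookkeeping to clean up: $i_X^*$ and $i_{X*}$ are maps in opposite directions ($E(X\amalg Y)\to E(X)$ versus $E(X)\to E(X\amalg Y)$), so they are not literally equal; what you mean (and what the statement, read charitably, intends --- the $i_X^*$ in the displayed formula does not typecheck with $e\{X\}\in E(X)$ and is best read as $i_{X*}$) is that under the additivity isomorphism $E(X\amalg Y)\cong E(X)\oplus E(Y)$ of Definition~\ref{BMhomologyDef} the pushforwards $i_{X*}, i_{Y*}$ are the inclusions of the summands, so the identity is precisely the statement that $e\{X\amalg Y\}$ has components $e\{X\}$ and $e\{Y\}$. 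With that rephrasing your proof is complete.
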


\begin{proof}
	For (a) see \cite[Proposition 2.11]{Az}; (b) is in fact the additivity property as in Proposition \ref{classvariety}. 
\end{proof}

\begin{Th}[Singular Gauss-Bonnet] \label{singGB}
	
	Let $ \pi: X\rightarrow \Spec k $ a proper algebraic variety over a field of characteristic zero. Let $E \in \SH(k) $ be a motivic spectrum and $i:\one_{k}\rightarrow E$ the unit map. Then
	
	\[
	i_{*}\chi_{c}(X)=\pi_{*}e^E\{X\} .
	\]
	
\end{Th}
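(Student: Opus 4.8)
The plan is to reduce the singular statement to the smooth and proper case via the cut-and-paste property, using the fact that both sides of the identity are additive invariants factoring through $K_0(\Var_{/k})$. First I would recall the smooth proper motivic Gauss-Bonnet formula of \cite{DJK} (see also \cite{Le20}): for $X$ smooth and proper over $k$ one has $i_* \chi_c(X) = \pi_* e^E(X)$ in $E_0(k/k)$, where $e^E(X)$ is the Euler class of the tangent bundle. This is the case we wish to bootstrap from. The key point is that both $X \mapsto i_* \chi_c(X) \in \GW(k)$ (extended $\mathbb{Z}$-linearly to $E_0(k/k)$) and $X \mapsto \pi_* e^E\{X\}$ define group homomorphisms out of $K_0(\Var_{/k})$: for the left side this is the cut-and-paste property of $\chi_c$ recorded in Proposition~(cut and paste)(a) together with \cite[Proposition 2.11]{Az}; for the right side this is immediate from the construction of $e^E\{-\}$ in Theorem~\ref{cdeftheorem} (equivalently Proposition~\ref{cclass}), since $e^E\{X\} = \sum_i \alpha_{i*} e^E\{U_i\}$ for a smooth stratification and proper pushforward is functorial.

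Next I would spell out the reduction. By Bittner's theorem \cite{Bitt}, $K_0(\Var_{/k})$ is generated by classes $[X \to \Spec k]$ with $X$ smooth and proper over $k$. Thus it suffices to check the identity $i_*\chi_c(X) = \pi_* e^E\{X\}$ on such generators. For $X$ smooth and proper, Lemma~\ref{lemmaprogroups}(3) gives $\hat{E}_0(X/k) \cong E_0(X/k)$ and, under this identification, Definition~\ref{Defproeulerclass} says $e^E\{X\}$ is just the Euler class of the tangent bundle $e^E(X) \in E_0(X/k)$ (and similarly for \emph{Definition I}, since the two definitions agree for proper $X$ by the Remark following Example~\ref{exproquadraticclass}). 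So on generators the claimed formula is exactly the smooth proper motivic Gauss-Bonnet formula, which holds. Since both sides are additive over $K_0(\Var_{/k})$ and agree on a generating set, they agree everywhere; applying this to the class $[\id_X : X \to X]$ for an arbitrary proper $X \in \Sch_k^{red}$ yields the theorem. One should note here that $\pi_*$ on pro-homology is well-defined by Lemma~\ref{lemmaprogroups}(2), and for $X$ proper it lands in $\hat{E}_0(k/k) \cong E_0(k/k)$, which for $E = \one_k$ is $\GW(k)$.

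\textbf{Main obstacle.}
The essential content is packaged into two inputs, and the proof is largely formal once they are in hand: (i) the smooth proper motivic Gauss-Bonnet formula of \cite{DJK}, and (ii) the additivity/$K_0(\Var)$-factorization of $\chi_c$. The step I expect to require the most care is making precise the passage from the equality in $\GW(k)$ at the level of smooth proper generators to an equality in the pro-homology group $\hat{E}_0(X/k)$ for singular proper $X$: one must check that the image $u_* \chi_c(X) \in E_0(k/k)$ is compatible, under $\pi_*$ and the cut-and-paste decomposition, with the stratification-independence built into $e^E\{X\}$ — i.e., that the $\mathbb{Z}$-linear extension of $X \mapsto u_*\chi_c(X)$ to $K_0(\Var_{/k})$ really does match $\pi_* e^E\{-\}$ and not merely agree numerically after further pushforward. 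This is exactly where Theorem~\ref{cdeftheorem} and Proposition~\ref{cclass} do the work, so the obstacle is less a genuine difficulty than a bookkeeping point about which group each identity lives in; the geometric heart of the matter is entirely contained in the already-established additivity of the Euler class along blow-up squares (Theorem~\ref{Eulerblowup}).
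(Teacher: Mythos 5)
Your proposal is correct and is essentially the paper's own (very terse) argument: reduce to the smooth proper case, which is the motivic Gauss--Bonnet theorem of \cite{DJK}, using that both $u_*\chi_c(-)$ and $\pi_*e^E\{-\}$ factor through $K_0(\Var_{/k})$. The only cosmetic difference is that you phrase the reduction via Bittner's smooth proper generators while the paper phrases it via a smooth stratification and the cut-and-paste formulas, which amounts to the same use of the additivity machinery (Proposition~\ref{cclass}, Theorem~\ref{cdeftheorem}) already established in the text.
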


\begin{proof}
	For $X$ smooth and proper the statement is the motivic Gauss-Bonnet theorem of \cite[Theorem 4.6.1]{DJK}, see also  \cite{Le18} on $SL$-oriented theories.
	For a general variety $X$ the theorem follows from the smooth and proper case, the existence of a smooth stratification, and the cut-and-paste formulas above.
\end{proof}

Let now $A$ be an abelian group.
	Given a constructible function $f: X \rightarrow A $ write
	$$ f=\sum{n_i 1_{Z_i}} $$ with $ Z_i $ locally closed subvarieties.
	Define 
	$$ e\{f\}=\sum{n_i e\{ Z_i \} } $$
	and
	$$\chi_{c}(X,f)=\sum{n_i \chic (Z_i)} .$$

\begin{Cor}
	For a constructible function $f:X \rightarrow A$
	\[ i_{*}\chi_{c}(X,f)=\pi_{*}\{f\} .\]
	
\end{Cor}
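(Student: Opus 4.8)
The plan is to reduce to the case of a single locally closed subvariety by linearity, and then bootstrap from Theorem~\ref{singGB}, which is the Corollary for \emph{proper} varieties, using closures together with the cut-and-paste relations for $\chi_c$ and for the Euler class. Concretely, $\chi_c(X,f)$ and $e\{f\}$ are defined termwise from a presentation $f=\sum_i n_i 1_{Z_i}$, and, reading $e\{f\}$ as the image of $\sum_i n_i[Z_i\hookrightarrow X]$ under the group homomorphism $e_X\{-\}\colon K_0(\Var_{/X})\to E(X)$ of Proposition~\ref{cclass} (or its pro-homology analogue), so that $\pi_*e\{f\}$ makes sense since $\pi\colon X\to\Spec k$ is proper, it suffices to prove, for a single locally closed subvariety $Z\subseteq X$ with inclusion $\iota\colon Z\hookrightarrow X$, that
\[
\pi_*\,e_X\{[Z\hookrightarrow X]\}=i_*\chi_c(Z).
\]
Let $\bar Z\subseteq X$ be the closure of $Z$ with its reduced structure, $j\colon\bar Z\hookrightarrow X$ the inclusion, and $\partial Z:=\bar Z\setminus Z$ with its reduced structure, $\iota_\partial\colon\partial Z\hookrightarrow\bar Z$. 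Since $Z$ is locally closed it is open in $\bar Z$, so $\partial Z$ is a closed subvariety of $\bar Z$; and since $X$ is proper, $\bar Z$ and $\partial Z$ are proper over $k$, with structure maps $\pi_{\bar Z}$ and $\pi_{\partial Z}$.

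Next I would record two cut-and-paste decompositions. For $\chi_c$, additivity (the cut-and-paste Proposition preceding Theorem~\ref{singGB}) gives $\chi_c(\bar Z)=\chi_c(Z)+\chi_c(\partial Z)$. For the Euler class, applying $e_{\bar Z}\{-\}$ to the relation $[\mathrm{id}_{\bar Z}]=[\partial Z\hookrightarrow\bar Z]+[Z\hookrightarrow\bar Z]$ in $K_0(\Var_{/\bar Z})$ gives
\[
e\{\bar Z\}=(\iota_\partial)_*\,e\{\partial Z\}+e_{\bar Z}\{[Z\hookrightarrow\bar Z]\}\in E(\bar Z),
\]
where I have used the compatibility of $e_{-}\{-\}$ with \emph{proper} pushforward (the Proposition following Proposition~\ref{cclass}), applied to the closed immersion $\iota_\partial$, to write $e_{\bar Z}\{[\partial Z\hookrightarrow\bar Z]\}=(\iota_\partial)_*e\{\partial Z\}$; here $e\{\bar Z\}=e_{\bar Z}\{[\mathrm{id}_{\bar Z}]\}$ and $e\{\partial Z\}=e_{\partial Z}\{[\mathrm{id}_{\partial Z}]\}$. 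Applying the same compatibility to the closed immersion $j$ and the factorization $Z\xrightarrow{\iota_Z}\bar Z\xrightarrow{j}X$ of $\iota$ gives $e_X\{[Z\hookrightarrow X]\}=j_*\,e_{\bar Z}\{[Z\hookrightarrow\bar Z]\}$, so that $\pi_*\,e_X\{[Z\hookrightarrow X]\}=\pi_{\bar Z\,*}\,e_{\bar Z}\{[Z\hookrightarrow\bar Z]\}$.

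Then I would apply Theorem~\ref{singGB} to the proper varieties $\bar Z$ and $\partial Z$, getting $\pi_{\bar Z\,*}e\{\bar Z\}=i_*\chi_c(\bar Z)$ and $\pi_{\partial Z\,*}e\{\partial Z\}=i_*\chi_c(\partial Z)$. Pushing the Euler-class decomposition forward along $\pi_{\bar Z}$ and using functoriality of pushforward ($\pi_{\bar Z}\circ\iota_\partial=\pi_{\partial Z}$) yields
\[
i_*\chi_c(\bar Z)=i_*\chi_c(\partial Z)+\pi_{\bar Z\,*}\,e_{\bar Z}\{[Z\hookrightarrow\bar Z]\},
\]
and subtracting the $\chi_c$-decomposition gives $\pi_{\bar Z\,*}e_{\bar Z}\{[Z\hookrightarrow\bar Z]\}=i_*(\chi_c(\bar Z)-\chi_c(\partial Z))=i_*\chi_c(Z)$, which together with the last identity of the previous paragraph is the desired single-variety formula. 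Summing over the $Z_i$ with coefficients $n_i$ finishes the proof.

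The main obstacle here is not conceptual but one of bookkeeping: each pushforward that appears must be along a proper morphism, so that it is defined in $E(-)$ for Definition~I and agrees with the pro-homology pushforward of Lemma~\ref{lemmaprogroups} for Definition~II. This is precisely why the argument is routed through the closed immersions $\partial Z\hookrightarrow\bar Z\hookrightarrow X$ and through $\pi\colon X\to\Spec k$ (all proper), rather than through the non-proper inclusion $Z\hookrightarrow X$ or the non-proper structure map of $Z$ directly. One should also double-check at the outset that the $Z$-term of $\pi_*e\{f\}$ is indeed $\pi_*\,e_X\{[Z\hookrightarrow X]\}$ and that the $Z$-term of $i_*\chi_c(X,f)$ is $i_*\chi_c(Z)$; both are immediate from the definitions of $\chi_c(X,f)$ and $e\{f\}$ recalled just before the Corollary together with the factorization $Z\hookrightarrow\bar Z\hookrightarrow X$.
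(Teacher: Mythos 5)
Your argument is correct and is essentially the proof the paper leaves implicit: by linearity one reduces to a single locally closed stratum $Z$, and your routing through the proper closed subschemes $\partial Z\subset \bar Z\subset X$ lets you quote Theorem~\ref{singGB} for $\bar Z$ and $\partial Z$ while keeping every pushforward along a proper map, exactly as the setup requires. The only point to flag is that you use $\chi_c(\bar Z)=\chi_c(Z)+\chi_c(\partial Z)$ for an open-closed decomposition, which is slightly stronger than the disjoint-union statement (a) of the cut-and-paste Proposition as literally stated; this localization additivity of $\chi_c$ is standard and is in any case already needed in the paper's own stratification proof of Theorem~\ref{singGB}.
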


\appendix
\section{Defining pro-classes}

We first recall some the definitions of normal crossing divisors, resolution of singularities, and weak factorization.

 \label{section:resofsingandweakfac}


\begin{Def} \label{Defncd}
	Let $X$ be in $\Sm_k$ and let $D$ be a reduced effective divisor on $X$ with irreducible components $D_1,\ldots, D_r$. For $I\subset \{1,\ldots, r\}$ we have the corresponding {\em stratum} of $D$, $D_I:=\cap_{i\in I}D_i$; let $|I|$ denote the cardinality of $I$. Recall that $D$ is a {\em simple normal crossing divisor} on $X$ if each $D_I$  is a smooth closed subscheme of $X$ of codimension $|I|$, in other words, the components $\{D_i\mid i\in I\}$ intersect properly and transversely on $X$. Let $W$ be an irreducible smooth closed subscheme of $X$, and let $I\subset \{1,\ldots, r\}$ be the index of maximal cardinality such that $W\subset D_I$. We say that $W$ {\em intersects $D$ with normal crossing} if for each $J\subset\{1,\ldots, r\}\setminus I$, $D_J$ intersects $W$ properly and transversely.  Equivalently, letting $D_W$ be the divisor $\sum_{j\in \{1,\ldots, r\}\setminus I}D_j\cap W$, $D_W$ is a simple normal crossing divisor on $W$. For $W$ an arbitrary smooth closed subscheme of $X$, we say that $W$ intersects $D$ with normal crossing if each irreducible component of $W$ does so.
\end{Def}

\begin{Def} \label{resolutionsing}
	We say that $k$ {\em admits resolution of singularities} if\\
	1. For each $X$ in $\Sch_k^{red}$, there is a proper birational morphism $p:X'\to X$ with $X'$ smooth over $k$, with $p$ a sequence of blow-ups with smooth {centres lying over the singular locus $X_{sing}$ of $X$}.\\
	2. Let $X$ be in $\Sch_k^{red}$ and let $U\subset X$ be a dense open subscheme. Suppose that $U$ is smooth over $k$. Then there exists a proper birational morphism $p:X'\to X$ with $X'$ smooth over $k$ such that $p$ a sequence of blow-ups with smooth centre lying over $X\setminus U$ and such that the complement $X'\setminus p^{-1}(U)$ is a simple normal crossing divisor on $X'$.\\
	3. Let $f:Y\dashrightarrow X$ be a rational map with $X, Y\in \Sch^{red}_k$, and suppose that $f$ is a morphism on a smooth dense open subscheme $U$ of $Y$. Then there is  proper birational morphism $p:Y'\to Y$ such that $p$ a sequence of blow-ups with smooth centres lying over $Y\setminus U$ and such that $f\circ p$ is a morphism. \\
	4. Let $X$ be in $\Sm_k$ and let $D$ be a simple normal crossing divisor on $X$. {Let $W\subset X$ be} a closed subscheme, none of {whose} components is contained in $D$, such that $W \setminus D$ is smooth. Then there exists a proper birational morphism $p: X' \rightarrow X$, with $X'$ smooth over $k$, such that $p$ is a sequence of blow-ups with smooth centre lying over $D\cap W$ and such that the strict transform $W' := p^{-1} [W] = \overline{p^{-1}(W \setminus D)} $ is smooth over $k$. {In addition, letting $D'=p^{-1}(D)_{red}$, $D'$ is a simple normal crossing divisor on $X'$ and $W'$ intersects $D'$ with normal crossing.}
\end{Def}
\begin{Def} \label{weakfact}
	We say that $k$ {\em admits weak factorization} if given $X, Y\in \Sm_k$, both proper over $k$,{$U\subset X$ and $V\subset Y$ dense open subsets}, and a rational map $f:X\dashrightarrow Y$ which {restricts to an isomorphism $f_{VU}:U\to V$},{there is a sequence of  rational maps $p_i:X_i\dashrightarrow X_{i+1}$, $i=0, \ldots, N-1$, such that\\
		a. $X_0=X$, $X_N=Y$ and for each $i$, either $p_i$ or $p_i^{-1}$ is a morphism. If $p_i$ is a morphism, then $p_i$ the morphism given by the blow-up of smooth centre $F_{i+1}\subset X_{i+1}$; if   $p_i^{-1}$  a morphism, then $p_i^{-1}$ is the   morphism given by the blow-up of smooth centre $F_i\subset X_i$.\\
		b. There is an index $r$ such that for each $i\le r$, the induced rational map $g_i: X_{i}\dashrightarrow X$ is a morphism, and for each $i\ge r$,  the induced rational map $f_i: X_i\dashrightarrow Y$ is a morphism. Moreover, for $i\le r$, $g_i$ is an isomorphism over $U$ and for $i\ge r$, $f_i$ is an isomorphism over $V$. \\
		c.  Suppose that $X\setminus U$ and $Y\setminus V$  are simple normal crossing divisors. Then for  $i\le r$, $D_i:=X_i\setminus g_i^{-1}(U)$ is a simple normal crossing divisor on $X_i$ and for $i\ge r$, $D_i:=X_i\setminus f_i^{-1}(V)$ is a simple normal crossing divisor on $X_i$. Moreover, each smooth centre $F_j\subset X_j$ is contained in $D_j$ and  intersects $D_j$ with normal crossing.  	}
\end{Def}

\begin{Rem}
	Every field of characteristic 0 admits resolution of singularities \cite[Main Theorem I]{Hir} and weak factorization \cite[Theorem 0.0.1]{AKMW}.
\end{Rem}
	
\begin{Lemma} \label{classsmooth}
	Suppose that $k$ admits weak factorization, and let $U\in \Sm_k$. Let $E$ be a Borel-Moore homology functor.  In order to define an element 
	$c\in \hat{E}(U)$, it suffices to assign a family of elements $c^i\in E(Z^i)$, one for each good closure $U \hookrightarrow Z^i$, such that for each morphism of {good} closures $\pi : i \to j$, with $\pi: Z^i \to Z^j$ being a blow-up along a smooth centre, {intersecting} $Z_j \setminus j(U)$ with normal crossing,   we have $\pi_*(c^i)=c^j$. 
\end{Lemma}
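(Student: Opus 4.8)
The goal is to show that a family $\{c^i \in E(Z^i)\}$ indexed by good closures $i : U \hookrightarrow Z^i$, compatible with those morphisms of good closures that are single blow-ups along smooth centres meeting the boundary with normal crossing, in fact assembles into an element of $\hat E(U) = \lim_{i \in \mathcal C^g(U)} E(Z^i)$. By Lemma \ref{lemmaprogroups}(1) it is enough to work with $\mathcal C^g(U)$ rather than all completions, so the whole content is: \emph{pushforward-compatibility along the restricted class of morphisms (iterated smooth blow-ups respecting the boundary) implies pushforward-compatibility along \emph{every} morphism in $\mathcal C^g(U)$.}

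The plan is as follows. First I would recall from Lemma \ref{cofinal}(2),(4) that between any two good closures $i : U \to Z^i$, $j : U \to Z^j$ there is \emph{at most one} morphism $\pi : i \to j$ in $\mathcal C^g(U)$; hence an element of the limit is precisely a choice of $c^i$ for every good closure such that $\pi_*(c^i) = c^j$ whenever $\pi : i \to j$ exists. So fix such a $\pi : Z^i \to Z^j$, a morphism of good closures; I must show $\pi_*(c^i) = c^j$. The key step is to factor $\pi$ through a chain of blow-ups of the allowed type. This is exactly where weak factorization (Definition \ref{weakfact}) enters: $Z^i$ and $Z^j$ are both smooth and proper over $k$, the boundaries $Z^i \setminus i(U)$ and $Z^j \setminus j(U)$ are simple normal crossing divisors, and $\pi$ restricts to the identity on the common dense open $U$. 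Weak factorization then produces a chain
\[
Z^i = X_0 \dashrightarrow X_1 \dashrightarrow \cdots \dashrightarrow X_N = Z^j
\]
in which each $p_m : X_m \dashrightarrow X_{m+1}$ or its inverse is a blow-up along a smooth centre, each $X_m$ comes with a morphism to $U$-closures (an isomorphism over $U$), each boundary $X_m \setminus (\text{preimage of } U)$ is a simple normal crossing divisor, and — crucially, by part (c) of Definition \ref{weakfact} — each smooth centre is \emph{contained in and meets the boundary with normal crossing}. Thus every $X_m$ is a good closure of $U$ and every elementary step $p_m$ (or $p_m^{-1}$) is a morphism of good closures of exactly the type for which compatibility was assumed.

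Next I would run the argument along the chain: define $c^{X_m} \in E(X_m)$ for each intermediate good closure to be the component of the element we are trying to build — more precisely, set $c^{X_0} = c^i$ and propagate. At each step where $p_m : X_m \to X_{m+1}$ is the blow-up, the hypothesis gives $p_{m*}(c^{X_m}) = c^{X_{m+1}}$ directly; at each step where $p_m^{-1} : X_{m+1} \to X_m$ is the blow-up, the hypothesis applied to that morphism gives $(p_m^{-1})_*(c^{X_{m+1}}) = c^{X_m}$, and I need to know this forces $c^{X_{m+1}}$ to be the correct pushforward along the composite $Z^i \to \cdots$. Here I use the uniqueness in Lemma \ref{cofinal}(2): the composite morphisms of good closures $X_0 \to X_m$ (whenever such a morphism exists, e.g. for $m \le r$ via the $g_i$, and symmetrically on the other side via the $f_i$) are unique, so functoriality of pushforward (Lemma \ref{lemmaprogroups}(2)) pins down each $c^{X_m}$ as the unique pushforward of $c^i$ along that composite, and similarly $c^{X_m}$ for $m \ge r$ is the unique pull-back-then-assigned value of $c^j$; at the joining index $m = r$ the two descriptions agree, giving $\pi_*(c^i) = c^j$. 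Finally, the same bookkeeping over a \emph{common refinement} of any two good closures (which exists by Lemma \ref{cofinal}(1),(4): $\mathcal C^g(U)$ is left-filtering) shows the $c^i$ are mutually compatible over all of $\mathcal C^g(U)$, hence define the desired $c \in \hat E(U)$.

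The main obstacle is the middle step: turning the ``zig-zag'' produced by weak factorization — which runs through morphisms pointing in \emph{both} directions — into an honest equality $\pi_*(c^i) = c^j$ in $E(Z^j)$. One must be careful that the intermediate assignments $c^{X_m}$ are well-defined independently of how one reads the chain, and this is precisely what the uniqueness statements of Lemma \ref{cofinal} and the functoriality of proper pushforward are for; the normal-crossing clause (c) of Definition \ref{weakfact} is what guarantees we never leave the subcategory $\mathcal C^g(U)$ or the class of allowed elementary morphisms along the way. No new geometric input beyond weak factorization is needed; the argument is the pro-object analogue of Aluffi's, transported verbatim to an arbitrary Borel--Moore homology functor.
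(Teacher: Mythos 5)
Your proposal follows the same strategy as the paper: apply weak factorization to decompose a general morphism of good closures $f:Z\to W$ into a zig-zag of elementary blow-ups, observe via clause (c) of Definition \ref{weakfact} that each intermediate $X_m$ is again a good closure of $U$ and each elementary step is of the allowed type, and then use the induced morphisms $g_m$ (for $m\le r$, to $X_0=Z$) and $f_m$ (for $m\ge r$, to $X_N=W$) together with the factorization $f\circ g_r=f_r$ to conclude $f_*(c^{Z})=c^{W}$. That is exactly the paper's argument.

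However, your bookkeeping in the key step is written with the arrows reversed, and this matters. You speak of ``composite morphisms of good closures $X_0\to X_m$'' and claim functoriality ``pins down each $c^{X_m}$ as the unique pushforward of $c^i$ along that composite.'' But the morphisms $g_m$ in Definition \ref{weakfact} go $X_m\to X_0$, not $X_0\to X_m$; proper pushforward therefore maps $E(X_m)\to E(X_0)$, and there is no way to obtain $c^{X_m}$ by pushing $c^{X_0}$ forward (nor is pushforward injective, so $c^{X_m}$ is not ``pinned down'' this way). The phrase ``unique pull-back-then-assigned value of $c^j$'' for $m\ge r$ likewise has no meaning in a Borel--Moore theory, which has no general pullback. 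What you actually need — and what the paper proves by induction along the chain — is the pair of identities $g_{m*}(c^{X_m})=c^{X_0}$ for $m\le r$ and $f_{m*}(c^{X_m})=c^{X_N}$ for $m\ge r$ (each class $c^{X_m}$ is already part of the given family, not something to be propagated); then at $m=r$ both identities hold simultaneously, and $f\circ g_r=f_r$ gives $f_*(c^{X_0})=f_*(g_{r*}c^{X_r})=f_{r*}c^{X_r}=c^{X_N}$. Your closing appeal to common refinements is superfluous: once $f_*(c^i)=c^j$ is established for every morphism $f:i\to j$ in $\mathcal C^g(U)$, the family is an element of the limit by definition, and pairs with no morphism between them impose no condition.
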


\begin{proof}By definition of $\hat{E}(U)$, an class $c \in \hat{E}(X)$ is defined by a class $c^i \in E(Z^i)$ for each good closure $X \in Z^i$, satisfying for each morphism of  that $f_*(c^j)=c^i$. It remains to see that we can check this compatibility condition only on morphisms $\pi$ which are blow-ups as in the lemma.
	
	Consider the following situation: $U \to Z^i$, $i=1,2,3$ are good closures, with proper morphisms $\alpha : (U, Z_2) \to (U,Z_1)$, $\beta : (U, Z_2) \to (U, Z_3)$ and $\gamma : (U,Z_1) \to (U, Z_3)$, satisfying $ \beta = \gamma \circ \alpha $. Assume we have $c^i \in E(Z_i)$ for each $i$. Then if $\alpha_* c^2 = c^1 $ and $\beta_* c^2 = c^{3}$, we evidently have $\gamma_* c^1 = \gamma_* \alpha_* c^2 = \beta_* c^2 = c^3 $.
	
	{Now let $f$ be a morphism of  good  closures $f : (i:U\hookrightarrow Z) \to (j: U\hookrightarrow W)$. Then $f: Z \to W $ is a proper birational map and $f\circ i=j$. By weak factorization, retaining notation from \ref{weakfact}, we can decompose $f$ as a composition of birational maps $p_{N-1}\circ\ldots\circ p_1\circ p_0$ such that either  $p_i: X_i \to X_{i+1}$ is the blow-up of $X_{i+1}$ along a smooth centre $F_{i+1}\subset X_{i+1}$ or $p_i^{-1}:X_{i+1}\to X_i$ is the blow-up of $X_i$ along a smooth centre $F_i\subset X_i$; here $Z=X_0$ and $W=X_N$.  Moreover, the induced rational map $U\dashrightarrow X_i$ is a morphism, defining a good closure $\alpha_i:U\to X_i$; let $D_i$ be the simple normal crossing divisor $X_i\setminus \alpha_i(U)$. Finally, if $p_i$ is a morphism, then $F_{i+1}\subset D_{i+1}$ and intersects $D_{i+1}$ with normal crossing, and if $p_i^{-1}$ is a morphism, then $F_i\subset D_i$  and intersects $D_i$ with normal crossing.   }

	Each $X_i$ is a good closure of $U$, so we have a well-defined class $c^i \in E(X_i)$.  Let $r$, $0 \leq r \leq N $ be as in \ref{weakfact}.
	{We have the induced morphisms $g_i:X_i\to Z$ for  $i\le r$ and $f_i:X_i\to W$ for $i\ge r$. For $i\le r$, $g_i$ defines a morphism of good closures $g_i:\alpha_i\to \alpha_0$ and for $i\ge r$,  $f_i$ defines a morphism of good closures $f_i:\alpha_i\to \alpha_N$.  Take $i\le r-1$ and suppose that $p_i:X_i\to X_{i+1}$ is a morphism. Then we have the morphism of good closures $p_i:\alpha_i\to \alpha_{i+1}$ which is the blow-up along a smooth centre intersecting $X_{i+1}\setminus \alpha_{i+1}(U)$ with normal crossing, so by assumption $p_{i*}(c^i)=c^{i+1}$. Thus 
		\[
		g_{i+1*}(c^{i+1})=g_{i+1*}\circ p_{i*}(c^i)=g_{i*}(c^i)
		\]
		If $i\le r$ and $p_i^{-1}$ is a morphism, we similarly have $g_{i*}(c^{i})=g_{i-1*}(c^{i-1})$; by induction, we thus have $g_{r*}(c^r)=c^0=c_i$. }
	
	Similarly, if $i\ge r$, we have $f_{i*}(c^i)=c^N=c_j$. Our original map $f$ is a morphism and we have the factorization  $f\circ g_r=f_r$, thus
		\[
		f_*(c^i)=f_*(g_{r*}(c^r))= (f\circ g_r)_*(c^r)=f_{r*}(c^r)=c_j.
		\]
\end{proof}

\begin{Def} Take $X\in \Sch^{red}_k$. \\
	 A {\em stratification} of $X$ is a choice of finite family of locally closed subschemes $\{U_i\}_{i\in I}$ {of $X$} such that $X$ is the disjoint union of the $U_i$. A stratification is called a {\em smooth} stratification if each of the $U_i$ is smooth over $k$.\\
	 A {\em refinement} of a stratification  $\{U_i\}_{i\in I}$ is a stratification $\{V_j\}_{j\in J}$ such that each $V_j$ is contained in some $U_i$.
\end{Def}

\begin{Rem} If $k$ is a perfect field, then each $X\in  \Sch^{red}_k$ admits a smooth stratification.
	Indeed, the assumption that $k$ is perfect implies that the smooth locus of $X$ is a dense open subscheme $U$, and the existence of a smooth stratification of $X$ containing $U$ as member follows by noetherian induction.
	
		If $k$ admits resolution of singularities or weak factorization, then $k$ is automatically perfect;
We assume that our base-field $k$ is perfect.
\end{Rem}
{Take $U\in \Sm_k$. Suppose we have defined  a system of classes $\{c^i \in E(Z^i)\}_i$ for each good closure $i:U \to Z^i$, compatible with respect to blow-ups along smooth centres as in the statement of Lemma~\ref{classsmooth}. The family $\{c^i \in E(Z^i)\}_i$ thus gives us a well-defined class $c\{U\}\in \hat{E}(U)$. Assuming that the classes $c\{U\}$ satisfy an additivity property with respect to a decomposition into an open subscheme and closed complement, we can extend our classes to any variety $X\in \Sch^{red}_k$ by choosing a stratification $\sU:=\{U_i\}_{i\in I}$ of $X$, and defining $c\{X\} =c_\sU:=\sum_{i\in I}\alpha_{i*}(c\{U_i\})$, where $\alpha_i:U_i\to X$ is the inclusion. The main point is that under this additivity assumption, the resulting class in $\hat{E}(X/k)$ is independent of the choice of smooth stratification of $X$.}

\begin{Prop} \label{classvariety}
	{Suppose that for each $U \in \Sm_k $ we have a   class $c\{U\} \in \hat{E} (U/k)$. Suppose in addition that for each smooth $U$ with a smooth closed subscheme $i: Z\hookrightarrow U$, and with open complement $j:U\setminus Z\hookrightarrow U$, we have}
	\begin{equation}\label{PropHypoth}
		c\{U\} = i_*c\{Z\} +j_* c\{U \setminus Z\} . 
	\end{equation}
	
	{Then for $X \in \Sch_k^{red}$, the element $c_\sU\{X\}\in \hat{E}(X)$ is independent of the choice of smooth stratification $\sU$ of $X$; we denote $c_\sU\{X\}$ by $c\{X\}$. Moreover, if we have an open subscheme $:U\to X$ with closed complement $i:Z\to X$, then $c\{X\}=i_*(c\{Z\})+j_*(c\{U\})$. }
\end{Prop}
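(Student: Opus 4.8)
The plan is to establish, by induction on $d=\dim X$, the following combined statement $P(d)$: for every $X\in\Sch_k^{red}$ with $\dim X\le d$, \emph{(a)} any two smooth stratifications $\sU,\sV$ of $X$ satisfy $c_\sU\{X\}=c_\sV\{X\}$ --- write $c\{X\}$ for the common value; \emph{(b)} if in addition $X$ is smooth, this $c\{X\}$ coincides with the class $c\{X\}\in\hat{E}(X/k)$ given in the hypothesis (equivalently, $c_\sU\{X\}=c\{X\}$ for the trivial stratification); \emph{(c)} for any open $j:U\hookrightarrow X$ with reduced closed complement $i:Z\hookrightarrow X$ one has $c\{X\}=i_*c\{Z\}+j_*c\{U\}$. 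Parts (a) and (c), taken over all $d$, are exactly the two assertions of the proposition. Throughout, $f_*$ denotes the map $\hat{E}(f)$ of Lemma~\ref{lemmaprogroups}, which is defined for arbitrary morphisms in $\Sch_k$ (in particular for the locally closed immersions of strata and for open immersions), and I will use functoriality $(fg)_*=f_*g_*$ freely.

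The core of the induction is part (b) in dimension $d$. Fix $X$ smooth with $\dim X=d$ and a smooth stratification $\sV=\{V_j\}$. Since $X$ is smooth its connected components are irreducible and clopen, so applying \eqref{PropHypoth} repeatedly with $Z$ a clopen piece (and likewise decomposing each $V_j$ along these pieces) reduces the claim to $X$ smooth \emph{and irreducible}. Then the generic point of $X$ lies in a single stratum $V_1$; being locally closed and containing the generic point of an irreducible scheme, $V_1$ is open and dense, and $Z:=X\setminus V_1$ (reduced structure) is closed of strictly smaller dimension, stratified by $\{V_j\}_{j\ge2}$. One cannot recover $c\{X\}$ from $c\{V_1\}$ and $c\{Z\}$ by a single use of \eqref{PropHypoth}, because $Z$ is typically singular; instead use the filtration $Z=Z^{(0)}\supseteq Z^{(1)}\supseteq\cdots$, where $Z^{(m+1)}$ is the reduced non-smooth locus of $Z^{(m)}$. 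As $k$ is perfect, each layer $S_m:=Z^{(m-1)}\setminus Z^{(m)}$ is smooth over $k$, the dimensions strictly decrease (so the filtration is finite), and every $Z^{(m)}$ is closed in $X$. Hence $O_m:=X\setminus Z^{(m)}$ is an \emph{open}, and therefore \emph{smooth}, subscheme of $X$, with $O_0=V_1$, $O_m\setminus O_{m-1}=S_m$ a smooth closed subscheme of $O_m$, and open complement $O_{m-1}$. Applying \eqref{PropHypoth} to the triple $(O_{m-1},S_m,O_m)$ at each step and composing pushforwards gives
\[
c\{X\}=(V_1\hookrightarrow X)_*c\{V_1\}+\sum_{m\ge1}(S_m\hookrightarrow X)_*c\{S_m\}=(V_1\hookrightarrow X)_*c\{V_1\}+(Z\hookrightarrow X)_*\,c_\tau\{Z\},
\]
where $\tau=\{S_m\}_m$ is a smooth stratification of $Z$. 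By $P(d-1)$(a), $c_\tau\{Z\}=c_{\{V_j\}_{j\ge2}}\{Z\}=\sum_{j\ge2}(V_j\hookrightarrow Z)_*c\{V_j\}$; substituting and regrouping yields $c\{X\}=\sum_j(V_j\hookrightarrow X)_*c\{V_j\}=c_\sV\{X\}$, which is (b) in dimension $d$.

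Parts (a) and (c) in dimension $d$ then follow formally from (b). For (a): given smooth stratifications $\sU=\{U_i\}$, $\sV=\{V_j\}$ of a reduced $X$, the locally closed pieces $U_i\cap V_j$ partition $X$, and since $k$ is perfect each admits a smooth stratification; their union is a smooth stratification $\sW$ refining both. For each $i$ the strata of $\sW$ lying in $U_i$ form a smooth stratification of the smooth scheme $U_i$, so by (b) (valid in all dimensions $\le d$) $c\{U_i\}=\sum_{W\subseteq U_i}(W\hookrightarrow U_i)_*c\{W\}$; pushing forward along $U_i\hookrightarrow X$ and summing over $i$ gives $c_\sU\{X\}=c_\sW\{X\}$, and likewise $c_\sV\{X\}=c_\sW\{X\}$. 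For (c): pick smooth stratifications of $U$ and of $Z$; their disjoint union is a smooth stratification of $X$, and evaluating $c_{(-)}\{X\}$ on it, then separating the $U$-strata from the $Z$-strata and invoking (a) for $U$ and for $Z$, produces $c\{X\}=j_*c\{U\}+i_*c\{Z\}$.

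The main obstacle is precisely that \eqref{PropHypoth} is assumed only for \emph{smooth} closed subschemes of a smooth ambient scheme. This is what forces the detour through the singular-locus filtration of $Z$ in part (b); the point requiring care is to verify that every intermediate scheme $O_m$ is genuinely open --- hence smooth --- in $X$, so that \eqref{PropHypoth} is applicable at each stage, and that the filtration terminates. The remaining ingredients --- that over a perfect field a reduced finite-type scheme has dense open smooth locus, admits a smooth stratification, and that passing to the non-smooth locus strictly drops dimension --- are standard and already recorded above.
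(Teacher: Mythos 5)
Your proof is correct, and it takes a genuinely different route from the paper's. The paper's proof is short: it invokes Bittner's theorem to present $K_0(\Var_{/X})$ by smooth varieties over $k$ modulo the cut-and-paste relation for smooth closed subvarieties, and then observes that hypothesis \eqref{PropHypoth} is exactly what is needed for $(f:Z\to X)\mapsto f_*c\{Z\}$ to respect those relations, so $c_X\{-\}$ factors through $K_0(\Var_{/X})$ and $c\{X\}:=c_X\{[id_X]\}$ is well defined and additive. You instead prove well-definedness directly, by induction on $\dim X$, via the singular-locus filtration of the complement of a dense smooth open stratum and a common-refinement argument; the key move --- that the intermediate schemes $O_m=X\setminus Z^{(m)}$ are open, hence smooth, in the smooth $X$, so that \eqref{PropHypoth} applies at each layer --- is exactly what substitutes for Bittner's group-theoretic reduction. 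The trade-off is clear: the paper's argument is one paragraph once Bittner is available (which itself requires characteristic $0$, via resolution of singularities), while yours is longer but self-contained, uses only the existence of smooth stratifications over a perfect field, and so in particular does not hinge on anything beyond perfectness. Two small things worth noting: you silently use the base case of the induction (empty or zero-dimensional schemes), which is fine but could be stated, and you should remark that $S_m=Z^{(m-1)}\cap O_m$ is closed in $O_m$ because $Z^{(m-1)}$ is closed in $X$ --- you do say so implicitly but it is the one pointwise check that makes \eqref{PropHypoth} applicable.
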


\begin{proof}
	We use here again Bittner's theorem \cite[Theorem 5.1]{Bitt} on the presentation of the relative Grothendieck group, this time instead of the presentation by blow-up relations, we use the presentation by cut-and-paste relations, namely that the  abelian group $K_0(\Var_{/S})$ is generated by $S$-varieties $X$ which are smooth over $k$, with relations of additivity with regards to any smooth subvariety $Y \subset X$ and its complement $X \setminus Y$. With view of this presentation, condition \ref{PropHypoth} just means that the mapping
	\begin{align*}
		{\Sm_k}_{/X} &\to \hat{E} (X/k) \\
		(Z \xrightarrow{f} X) &\mapsto f_*c\{Z\} 
	\end{align*}
	factors through $K_0(\Var_{/X})$ for every $X \in \Sch^{red}_k$, giving a map $c_X : K_0(\Var_{/X}) \to \hat{E} (X/k)$ . As a consequence we get a well defined class $c\{X\}:= c_X \{[id: X \to X]\}$. It is clear that for a stratification $\sU$, $c\{X\} = c_{\sU}\{X\}$ and this does not depend on the stratification.
\end{proof}

We now formulate a condition on a family $\{c_U^{\bar{U}}\in E(\bar{U}/k)\}_{U\in \Sm_k, \bar{U}\in \mathcal{C}^g(U)}$, that will ensure we can define $c\{X\}$ for every $X\in \Sch_k^{red}$. This follows the same approach as in \cite{Alu}.

\begin{Def}[good local data] \label{goodlocaldata}
	Suppose we  are given, for each $U\in \Sm_k$ and each good compactification $i:U\to\bar{U}$ an element $c_U^{\bar{U}}\in E(\bar{U}/k)$; if $U$ happens to be proper over $k$, we write $c(U)$ for $c(U,id_U)$.{We suppose in addition that if $\bar{U}$ is a disjoint union, $\bar{U}=\bar{U}_1\amalg \bar{U}_2$, with inclusions $j_i:\bar{U}_i\to \bar{U}$, then letting $U_i=U\cap\bar{U}_i$, we have
		\begin{equation}\label{additiivity}
			c_U^{\bar{U}}=j_{1*}(c_{U_1}^{\bar{U}_1})+j_{2*}(c_{U_2}^{\bar{U}_2})
	\end{equation}}

	{Let $(U,\bar{U}, W )$ be a triple with
		\begin{itemize}[noitemsep]
			\item   $\bar{U}\in \Sm_k$ a smooth   variety,
			\item $U$ a dense open subscheme of $\bar{U}$,
			\item $W$ a smooth closed subvariety of $\bar{U}$,
		\end{itemize}
		such that  $D:= \bar{U} \setminus U$ is a simple normal crossing divisor on $\bar{U}$ and $W$ intersects $D$ with normal crossing. }
	
	{Let $\pi: {\bar{V}} = Bl_W(\bar{U}) \to \bar{U}$ be the blow-up of $\bar{U}$ along $W$, with exceptional divisor $F$, and let $w:W\to \bar{U}$ be the inclusion. Let $\tilde{D}$ be the proper transform $\pi^{-1}[D]$.   Since $W$ intersects $D$ with normal crossing,  $\tilde{D}+F$ is a  is a simple normal crossing divisor on $\bar{V}$. Let $Z:=W \cap U$, let $V:=\pi^{-1}(U)$,  let $E:=  \pi^{-1}(Z)=F \cap V$. Note that  $\bar{U}$ is a good compactification of $U$, $\bar{V}$ is a good closure of $V\setminus E$, so $c_U^{\bar{U}}$ and $c_{V \setminus E}^{\bar{V}}$ are defined.  Letting $W_Z\subset W$ be the closure of $Z$ in $W$, $W_Z$ is the union of the irreducible components of $W$ that are not contained in $D$, and $Z\subset W_Z$ is a good closure of $Z$. We write $c_Z^W$ for  the image of $c_Z^{W_Z}$ in $E_n(W/k)$ via the pushforward for the inclusion $W_Z\hookrightarrow W$; if $Z=\emptyset$, we set $c_Z^W=0$.}
	
	{We say  the family $\{c_U^{\bar{U}}\in E(\bar{U}/k)\}_{U\in \Sm_k, i\in \mathcal{C}^g(U)}$  {\em  forms good local data} if  for each triple $(U,\bar{U}, W)$ as above,  we have
		\begin{equation}\label{gld}
			c_U^{\bar{U}} = \pi_* c_{V \setminus E}^{\bar{V}} + w_* c_Z^W\in {E}(\bar{U}).
	\end{equation}}
	
\end{Def}

{\begin{Rem}\label{ReducibleGLD}  One could also frame Definition~\ref{goodlocaldata} with the additional requirement that $W$ and $U$ are irreducible. In fact, given a $\{c_U^{\bar{U}}\in E(\bar{U}/k)\}_{U\in \Sm_k, i\in \mathcal{C}^g(U)}$ that satisfies this restricted set of conditions, it follows from the assumed additivity of the classes $c_U^{\bar{U}}$ with respect to disjoint union, that this family forms good local data in the sense of the full Definition~\ref{goodlocaldata}. It is however easier to apply Definition~\ref{goodlocaldata} to the problem at hand if we include the case of reducible $U$ and $W$ in the definition.
\end{Rem}}

{
	\begin{Lemma}\label{TripleCover} Take $U\in \Sm_k$ and let $i:Z\to U$ be a closed immersion with $Z\in \Sm_k$. Let $i_U:U\to U'$ be a good closure. Then  there is a triple $(U, \bar{U}, W)$ satisfying the conditions of Definition~\ref{goodlocaldata} with $Z=W\cap U$ dense in $W$, such that the rational map $\bar{U}\dashrightarrow U'$  induced by the identity map on $U$ is a morphism, and defines a map of good closures 
		$(U\hookrightarrow \bar{U})\to (U\hookrightarrow U')$.
\end{Lemma}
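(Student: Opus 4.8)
The plan is to obtain the triple $(U,\bar{U},W)$ by a single application of part~4 of the resolution of singularities axiom in Definition~\ref{resolutionsing}. Since $i_U:U\hookrightarrow U'$ is a good closure, $D':=U'\setminus U$ is a simple normal crossing divisor on the smooth proper variety $U'$. First I would let $W_0\subseteq U'$ be the scheme-theoretic closure of $i(Z)$. Because $Z$ is closed in $U$, each irreducible component of $W_0$ is the closure of a component of $Z$ and hence meets $U=U'\setminus D'$, so no component of $W_0$ is contained in $D'$; moreover $W_0\setminus D' = i(Z)$ is smooth over $k$. Thus $(U',D',W_0)$ satisfies the hypotheses of Definition~\ref{resolutionsing}(4).

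Applying that axiom yields a proper birational morphism $p:\bar{U}\to U'$, with $\bar{U}$ smooth over $k$, given by a sequence of blow-ups along smooth centres all lying over $D'\cap W_0$, such that the strict transform $W:=p^{-1}[W_0]=\overline{p^{-1}(W_0\setminus D')}$ is smooth over $k$, the divisor $D:=p^{-1}(D')_{\mathrm{red}}$ is a simple normal crossing divisor on $\bar{U}$, and $W$ intersects $D$ with normal crossing. Since all blow-up centres lie over $D'\cap W_0\subseteq D'$, the morphism $p$ restricts to an isomorphism over $U'\setminus D' = U$; via this isomorphism I would regard $U$ as a dense open subscheme of $\bar{U}$, and observe that $\bar{U}$ is proper over $k$ (a tower of blow-ups of the proper $U'$) with $\bar{U}\setminus U = D$, so that $U\hookrightarrow\bar{U}$ is a good closure in the sense of Definition~\ref{Defgoodclosure}.

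Next I would verify that $(U,\bar{U},W)$ meets the conditions of Definition~\ref{goodlocaldata}: $\bar{U}$ smooth, $U$ dense open, $W$ a smooth closed subvariety, $D=\bar{U}\setminus U$ simple normal crossing, and $W$ meeting $D$ with normal crossing are all immediate from the previous paragraph. The only identity needing a word is $W\cap U = i(Z)$ with $i(Z)$ dense in $W$: since $p$ is an isomorphism over $U$, $p^{-1}(W_0\setminus D')$ is identified with $W_0\cap U = \overline{i(Z)}^{U'}\cap U = i(Z)$, whence $W=\overline{i(Z)}^{\bar{U}}$ and $W\cap U = i(Z)$, which is by construction dense in $W$. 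Finally, the rational map $\bar{U}\dashrightarrow U'$ induced by $\mathrm{id}_U$ is exactly $p$, which is a morphism, and $p\circ(U\hookrightarrow\bar{U}) = (U\hookrightarrow U')$, so $p$ is a morphism of good closures $(U\hookrightarrow\bar{U})\to(U\hookrightarrow U')$, completing the argument.

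I do not expect a genuine obstacle: the geometric content is entirely contained in Definition~\ref{resolutionsing}(4), and the remaining work is bookkeeping — identifying $U$ with an open subscheme of $\bar{U}$ and checking $W\cap U = Z$. The one point to handle with a little care is that the strict transform produced by the axiom is literally the closure of $i(Z)$ inside $\bar{U}$, so that the required density of $Z$ in $W$ comes for free; the trivial case $Z=\emptyset$ (take $W=\emptyset$, $\bar{U}=U'$) can be dispatched separately.
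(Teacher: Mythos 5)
Your proof is correct and follows essentially the same route as the paper: take the closure of $i(Z)$ in $U'$, observe that no component of this closure lies inside $D'$ and that it is smooth away from $D'$, apply Definition~\ref{resolutionsing}(4) to produce the blow-up $\bar{U}\to U'$ and the strict transform $W$, and check the conditions of Definition~\ref{goodlocaldata}. Your write-up is somewhat more explicit about verifying the hypotheses of the resolution axiom and the identification $W\cap U = Z$, but the argument is the same.
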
}

{\begin{proof} Let $W''\subset U'$ be the closure of $i_U(Z)$ and let $D'=U'\setminus U$. By  resolution of singularities, Definition \ref{resolutionsing}(4), there is a proper birational morphism $\pi:\bar{U}\to U'$ that is a sequence of blow-ups along  smooth centres lying over $W'\cap D'$, such that $D:=\pi^{-1}(D')_{red}$ is a simple normal crossing divisor on $\bar{U}$, the proper transform $W:=\pi^{-1}[W']$ is smooth, and $W$ intersects $D$ with normal crossing. In other words, the triple $(U, \bar{U}, W)$ satisfies the conditions of Definition~\ref{goodlocaldata}, and $Z=W\cap U$ is dense in $W$. By construction the rational map $\bar{U}\dashrightarrow U'$  induced by the identity map on $U$ is a morphism, and thus define maps of good compactifications 
		$(U\hookrightarrow \bar{U})\to U\hookrightarrow U')$.
\end{proof}}

\begin{Prop} \label{goodclass}
	Suppose that $k$ admits resolution of singularities and weak factorization. Suppose we are given for each $U\in \Sm_k$ and each good closure  $i:U\to \bar{U}$,  an element $c_U^{\bar{U}} \in E(\bar{U}/k)$ such that the family $\{c_U^{\bar{U}}\in E(\bar{U}/k)\}_{U\in \Sm_k, \bar{U}\in \mathcal{C}^g(U)}$  forms good local data. Then\\
	1. For each $U\in \Sm_k$, the family of elements $\{c_U^{\bar{U}}\}_{\bar{U}\in \mathcal{C}^g(U)}$ defines an element $c\{U\} \in \hat{E}(U/k)$.\\
	2. For  $X$ in $\Sch^{red}_k$ choose a smooth stratification $\sU:=\{U_i\}_{i\in I}$  with inclusions $\alpha_{U_i}:U_i\to X$. Then $c_\sU(X) :=\sum_{i\in I}\alpha_{i*}(c\{U_i\})) \in \hat{E}(X/k)$ is independent of the choice of $\sU$, giving rise to the well-defined element
	\[
	c\{X\}: = c_\sU(X)\in \hat{E}(X/k).
	\]
\end{Prop}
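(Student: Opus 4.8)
The plan is to reduce both assertions to three results already established: Lemma~\ref{classsmooth}, which says that to build a class in $\hat{E}(U/k)$ for $U$ smooth it suffices to give compatible classes on good closures and check compatibility only along blow-ups of smooth centres lying in the boundary; Lemma~\ref{TripleCover}, which realises a prescribed smooth closed subscheme of $U$ as a dense subscheme of a blow-up centre on a dominating good closure; and Proposition~\ref{classvariety}, which upgrades an open/closed additivity for smooth $U$ to independence of the smooth stratification for arbitrary $X$. So for (1) I will check the compatibility condition of Lemma~\ref{classsmooth} by specializing the good local data identity \eqref{gld} to the case where the centre of the blow-up lies in the boundary, and for (2) I will verify the hypothesis \eqref{PropHypoth} of Proposition~\ref{classvariety} by a component-wise computation in $\hat E(U/k)$ and then invoke that proposition.

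For part (1), fix $U\in\Sm_k$. By Lemma~\ref{lemmaprogroups}(1) a class in $\hat{E}(U/k)$ is the same as a compatible system over $\mathcal{C}^g(U)$, and by Lemma~\ref{classsmooth} it is enough to check $\pi_*(c_U^{\bar V})=c_U^{\bar U}$ whenever $\pi\colon(U\hookrightarrow\bar V)\to(U\hookrightarrow\bar U)$ is a morphism of good closures given by a blow-up $\bar V=Bl_W(\bar U)$ along a smooth centre $W$ meeting $D:=\bar U\setminus U$ with normal crossing. Because $\pi$ is a morphism of good closures of the same $U$ it restricts to the identity on $U$, hence is an isomorphism over a neighbourhood of $U$, which forces $W\cap U=\emptyset$, i.e.\ $W\subset D$ (this is automatic for the blow-ups produced by weak factorization, cf.\ Definition~\ref{weakfact}(c)). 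Now apply \eqref{gld} to the valid triple $(U,\bar U,W)$: here $Z=W\cap U=\emptyset$ so $c_Z^W=0$ by the convention in Definition~\ref{goodlocaldata}, while $V=\pi^{-1}(U)\cong U$ and $E=F\cap V=\emptyset$, so $c_{V\setminus E}^{\bar V}=c_U^{\bar V}$; thus \eqref{gld} reads exactly $c_U^{\bar U}=\pi_*c_U^{\bar V}$. This produces the desired $c\{U\}\in\hat E(U/k)$ with $(U,\bar U)$-component $c_U^{\bar U}$ (the additivity \eqref{additiivity} of the family is what matches the disjoint-union behaviour needed to even be in the situation of Lemma~\ref{classsmooth}).

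For part (2), by Proposition~\ref{classvariety} it suffices to prove the additivity \eqref{PropHypoth}, i.e.\ $c\{U\}=i_*c\{Z\}+j_*c\{U\setminus Z\}$ in $\hat E(U/k)$ for each smooth closed $i\colon Z\hookrightarrow U$ with open complement $j\colon U\setminus Z\hookrightarrow U$; the independence of the stratification and the open/closed additivity for general $X\in\Sch_k^{red}$ then follow verbatim from that proposition. Since $\hat E(U/k)=\lim_{\bar U\in\mathcal{C}^g(U)}E(\bar U)$, I will check the identity component-wise, and by Lemma~\ref{TripleCover} (applied to $Z\subset U$) every good closure of $U$ is dominated by one fitting into a triple $(U,\bar U,W)$ of Definition~\ref{goodlocaldata} with $Z=W\cap U$ dense in $W$, so by cofinality (Lemma~\ref{cofinal}(4)) it is enough to match $(U,\bar U)$-components for such $\bar U$. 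The component of $c\{U\}$ at $\bar U$ is $c_U^{\bar U}$ by part (1). As $Z$ is dense in the smooth $W$ and $W$ meets $D$ with normal crossing, $W$ is a good closure of $Z$ and the closed immersion $w\colon W\to\bar U$ is a morphism of completions over $i$, so (using Lemma~\ref{lemmaprogroups}(2)) the component of $i_*c\{Z\}$ at $\bar U$ is $w_*c_Z^W$. Likewise, with $\bar V=Bl_W(\bar U)$, exceptional divisor $F$, $V=\pi^{-1}(U)$, $E=F\cap V$, one has that $\bar V$ is a good closure of $V\setminus E$ and $\pi$ restricts to an isomorphism $V\setminus E\xrightarrow{\ \sim\ }U\setminus Z$ compatible with $j$, so $\pi$ is a morphism of completions over $j$ and the component of $j_*c\{U\setminus Z\}$ at $\bar U$ is $\pi_*c_{V\setminus E}^{\bar V}$. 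The good local data identity \eqref{gld} for $(U,\bar U,W)$ is precisely $c_U^{\bar U}=\pi_*c_{V\setminus E}^{\bar V}+w_*c_Z^W$, which is the equality of components we wanted; hence \eqref{PropHypoth} holds, and Proposition~\ref{classvariety} concludes.

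I do not expect a genuine obstacle: all the substantive work is already carried out in Lemmas~\ref{classsmooth} and~\ref{TripleCover} and Proposition~\ref{classvariety}, and what remains is bookkeeping. The one point demanding care is the identification of the relevant pushforwards on pro-homology: one must choose the completions of $Z$ and of $U\setminus Z$ correctly (namely $W$, respectively $Bl_W(\bar U)$) and verify that $\pi$ genuinely induces $V\setminus E\cong U\setminus Z$ over $U$, so that the three component identities assemble into the single equation \eqref{gld}. Once this matching is arranged and the cofinality of the Lemma~\ref{TripleCover} closures is used, the argument is complete.
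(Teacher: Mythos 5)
Your proof is correct and follows essentially the same route as the paper: part (1) reduces to Lemma~\ref{classsmooth} by specializing \eqref{gld} to a triple with $W\subset D$ (so $Z=\emptyset$), and part (2) verifies the hypothesis \eqref{PropHypoth} of Proposition~\ref{classvariety} using the cofinality provided by Lemma~\ref{TripleCover}. You spell out the component-wise identifications at a good closure $\bar U$ — that the $\bar U$-component of $i_*c\{Z\}$ is $w_*c_Z^W$ via the closed immersion $W\hookrightarrow\bar U$, and that of $j_*c\{U\setminus Z\}$ is $\pi_*c_{V\setminus E}^{\bar V}$ via the blow-up $\pi$ — more explicitly than the paper does, but the underlying argument is the same.

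One inessential remark: your parenthetical in part (1) about needing the additivity \eqref{additiivity} ``to even be in the situation of Lemma~\ref{classsmooth}'' is not quite the right attribution — Lemma~\ref{classsmooth} itself requires only a Borel-Moore homology functor and compatibility under the listed blow-ups, and the additivity of $E$ is already built into Definition~\ref{BMhomologyDef}; the condition \eqref{additiivity} is used later when matching classes on disjoint unions. This does not affect the correctness of your argument.
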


\begin{proof}

	{For (1), we show that  the family  $\{c_U^{\bar{U}}\}_{i\in \mathcal{C}^g(U)}$ satisfies the hypothesis of Lemma \ref{classsmooth}.  Given a map of good closures $f:(i:U\to Z^i)\to (j:U\to Z^j)$ such that $f$ is the blow-up of $Z^j$ along a smooth centre $W$ which intersects $Z^j\setminus j(U)$ with normal crossing, we take $\bar{U}=Z^j$. This gives us the triple $(U, \bar{U},W)$ satisfying the conditions of  Definition~\ref{goodlocaldata}, which has $Z=\emptyset$ and for which $\bar{V}:=Bl_W\bar{U}\to \bar{U}$ is the map $f:Z^i\to Z^j$.  The identity \eqref{gld}  gives the  identity
		\[
		f_*(c_U^{Z^i})=c_U^{Z^j}.
		\]
		Having verified the hypothesis of Lemma \ref{classsmooth}, (1) follows.}

	{We now prove (2). It follows from Lemma~\ref{TripleCover} that the good closures of $U$   of the form $U\subset \bar{U}$,   with $(U, \bar{U}, W)$ a triple satisfying the conditions of Definition~\ref{goodlocaldata}, such that $Z=W\cap U$ and with $Z$ dense in $W$, gives a cofinal subcategory of $\sC^g(U)$. Thus, the collection of  identities \eqref{gld} for each such triple $(U, \bar{U}, W)$ suffices to show that 
		$c\{U\} = i_*c\{Z\} +j_* c\{ U \setminus Z\}$ in $\hat{E}(U/k)$.  By Proposition~\ref{classvariety} this shows that the classes $c_\sU\{X\}\in \hat{E}(X/k)$ for $\sU$ a smooth stratification of $X\in \Sch_k^{red}$ are independent of the choice of $\sU$, proving (2). }
\end{proof}

\small
\bibliographystyle{alphamod}

\let\mathbb=\mathbf

\bibliographystyle{plain}

\begin{flushleft}
	\small \textsc{Institut f\"ur Mathematik, Universit\"at Zürich \\
		Winterthurerstrasse 190, 8057 Z\"urich, Switzerland} \\
	ran.azouri@math.uzh.ch
\end{flushleft}

\end{document}